\newtheorem{thm}{Theorem}[section]
\newtheorem{cor}[thm]{Corollary}
\newtheorem{lem}[thm]{Lemma}
\newtheorem{exm}[thm]{Example}
\newtheorem{prop}[thm]{Proposition}
\theoremstyle{definition}
\newtheorem{defn}[thm]{Definition}
\theoremstyle{remark}
\newtheorem{rem}[thm]{\bf Remark}
\numberwithin{equation}{section}
\begin{document}
\title[Irreducible representations of Leavitt path algebras]
{Irreducible representations of Leavitt path algebras}

\author[Xiao-Wu Chen] {Xiao-Wu Chen}

\thanks{The author is supported by Special Foundation of President of The Chinese Academy of Sciences
(No.1731112304061) and  National Natural Science Foundation of China (No.10971206).}
\subjclass[2010]{16G20, 16E50, 16D90}
\date{\today}

\thanks{E-mail: xwchen$\symbol{64}$mail.ustc.edu.cn}
\keywords{quiver, Leavitt path algebra, irreducible representation, left-infinite path, algebraic branching system}%

\maketitle

\dedicatory{}%
\commby{}%

\begin{abstract}
We construct some irreducible representations of the Leavitt path algebra of an arbitrary
quiver. The constructed representations are associated to certain algebraic branching systems. For a row-finite quiver, we
classify algebraic branching systems, to which irreducible representations of the Leavitt path algebra are associated. For
a certain quiver, we obtain a faithful completely reducible representation of the Leavitt path
algebra. The twisted representations of the constructed ones under the scaling action are studied.
\end{abstract}

\section{Introduction}

Let $k$ be a field and let $Q$ be an arbitrary quiver. The notion of the path algebra $kQ$ of $Q$
is well known in  representation theory (\cite{ARS}).  Unlike this, the Leavitt path algebra
$L_k(Q)$ of $Q$ with coefficients in $k$ is relatively new, which is introduced in \cite{AA05} and \cite{AMP}. Leavitt path algebras generalize the important
algebras studied by Leavitt in \cite{Lev57, Lev}, and they are given as algebraic analogues
of the Cuntz-Krieger $C^*$-algebras $C^*(Q)$ (\cite{CK, Ra05}). Recent research indicates
that the Leavitt path algebra of a quiver captures certain homological properties  of both the path algebra
and its Koszul dual; see \cite{AB, Smi, Ch}.

The representation theory of the Leavitt path algebra $L_k(Q)$ is studied in \cite{Ar, AB, GR}. In \cite{AB}, the authors
 prove that the category of finitely presented $L_k(Q)$-modules is equivalent to a quotient category of
  the corresponding category of $kQ$-modules. This result is extended in \cite{Smi} via a completely different method.
  Using the notion of algebraic branching system,  a construction of $L_k(Q)$-modules is given in \cite{GR}. Moreover, some sufficient conditions are given to guarantee the faithfulness of the constructed modules.

   We are interested in simple modules, or equivalently, irreducible representations
   of Leavitt path algebras. Recall that irreducible representations that can be embedded in the
   Leavitt path algebra itself are just minimal left ideals. These representations are classified in 
   \cite{AMMS08, AMMS}. This classification plays an important role in the study of the socle series 
   of Leavitt path algebras; see \cite{ARS11}.

    In this paper,  we construct some irreducible representations of the Leavitt path algebra $L_k(Q)$ of an arbitrary quiver $Q$. These representations are divided into two classes. We use infinite paths of the quiver  to construct representations in the first class, and use finite paths that terminate at a sink for the second class. Our construction is inspired by a construction  of representations of Cuntz algebras   in \cite{LNS}. In view of  results in \cite{Smi}, it seems that the representations in the first class relate to the point modules studied in \cite{Smii}, while the latter
   plays an important role in non-commutative algebraic geometry. The representations in the second class are isomorphic to minimal left ideals of $L_k(Q)$ that is generated by idempotents corresponding to sinks of the quiver $Q$; these representations are known, at least for countable quivers (\cite{AMMS08, AMMS}).

   The paper is structured as follows. We recall some basic notions and introduce some terminologies in Section 2. The main construction is given in Section 3; see Theorems \ref{thm:1} and \ref{thm:N}. In Section 4, we draw some  consequences from the constructed representations. Based on results in \cite{AMMS}, we point out that for a countable quiver, the constructed irreducible representations contain all minimal left ideals of the Leavitt path algebra; see Proposition \ref{prop:minimal}. We prove the faithfulness of a certain completely reducible representation; see Proposition \ref{prop:faithfulness}. Section 5 is devoted to relating the constructed representations to algebraic branching systems in \cite{GR}. For a row-finite quiver, we classify  algebraic branching systems whose associated representations are irreducible. It turns out that irreducible representations associated to algebraic branching systems are necessarily isomorphic to the ones constructed in Section 3; see Theorem \ref{thm:irre}. In the final section, we study the twisted representations of the constructed irreducible representations under the scaling action. This allows us to obtain new irreducible representations and prove another faithfulness result of some completely reducible representation; see Theorem \ref{thm:conclusion} and Proposition \ref{prop:faithfulness2}.

\section{Preliminaries}
We recall basic notions related to quivers and Leavitt path algebras, and introduce some terminologies for later use.
The references for quivers are \cite[Chapter III]{ARS} and \cite{AA05}, and for Leavitt path algebras are
\cite{AA05, AMP, AA08, Tom}.

\subsection{Quivers and left-infinite paths}

Recall that a \emph{quiver}  $Q=(Q_0, Q_1; s,t)$ consists of a set $Q_0$ of vertices, a set $Q_1$ of arrows
and two maps $s, t\colon Q_1\rightarrow Q_0$, which assign an arrow $\alpha$ to its starting and terminating vertices
 $s(\alpha)$ and $t(\alpha)$, respectively. A quiver is also called a directed graph. A vertex where there is no arrow
  starting is called a \emph{sink}, and a vertex where there are infinitely many arrows starting is called an \emph{infinite emitter}.  A vertex is \emph{regular} if it is neither a sink nor an infinite emitter. The quiver $Q$ is \emph{regular} (resp. \emph{row-finite})  provided  that each vertex is regular (resp. not an infinite emitter).

 A (finite) \emph{path} in the quiver $Q$  is
 a sequence $p=\alpha_n \cdots \alpha_2\alpha_1$ of arrows with $t(\alpha_i)=s(\alpha_{i+1})$ for $1\leq i\leq n-1$; in this
 case, the path $p$ is said to have length $n$, denoted by $l(p)=n$. We denote $s(p)=s(\alpha_1)$ and $t(p)=t(\alpha_n)$. We identify an arrow with a path of length one, and associate to each vertex $i$  a trivial path $e_i$ of length zero. A nontrivial path $p$ with the same starting and terminating vertex is an \emph{oriented cycle}. An oriented cycle of length one is called a \emph{loop}.

Let $k$ be a field.  We denote by $Q_n$ the set of paths of length $n$, and by $kQ_n$ the vector space over $k$
 with basis $Q_n$. Here, we identify a vertex $i$ with the corresponding trivial path $e_i$.
 The \emph{path algebra} is defined as $kQ=\oplus_{n\geq 0}kQ_n$, whose multiplication is given as follows:
  for two paths $p$ and $q$, if $s(p)=t(q)$, then the product $pq$ is the concatenation of paths; otherwise,
  set the product $pq$ to be zero. We write the concatenation of paths from the right to the left.

  The path algebra $kQ$ is naturally $\mathbb{N}$-graded. Observe that  for a vertex $i$ and a  path $p$, $pe_{i}=\delta_{i, s(p)} \; p$ and $e_i p= \delta_{i, t(p)} \; p$.  Here, $\delta$ is the Kronecker symbol. In particular, $\{e_i\; |\; i\in Q_0\}$ is a set
  of pairwise orthogonal idempotents in $kQ$. Observe that the $k$-algebra $kQ$ is not necessarily unital.

 We need infinite paths in a quiver. A \emph{left-infinite path} in $Q$ is an infinite sequence
 $p=\cdots \alpha_n\cdots \alpha_2\alpha_1$ of arrows with $t(\alpha_i)=s(\alpha_{i+1})$ for all $i\geq 1$. Set $s(p)=s(\alpha_1)$. Denote by $Q_{\infty}$ the set of left-infinite paths in $Q$.  For example, for an oriented cycle $q$,  we have a left-infinite path $q^\infty=\cdots q\cdots qq$; such a left-infinite path is said to be \emph{cyclic}. We remark that the set $Q_\infty$ together
   with the product topology plays an important role in symbolic dynamic system (\cite{LM}).

 For a left-infinite path $p$ and $n\geq 1$, denote by $\tau_{\leq n}(p)=\alpha_n\cdots \alpha_2\alpha_1$ and  $\tau_{>n}(p)=\cdots \alpha_{n+2}\alpha_{n+1}$ the two truncations. Observe that $\tau_{\leq n}(p)$ lies in $Q_n$ and that $\tau_{> n}(p)$ is a left-infinite path. Hence, a left-infinite path $p$ is cyclic if and only if there exists some  $n\geq 1$ such that $p=\tau_{>n}(p)$. We set $\tau_{\leq 0}(p)=e_{s(p)}$ and $\tau_{>0}(p)=p$.

 Two left-infinite paths $p$ and $q$ are \emph{tail-equivalent}, denoted by $p\sim q$, provided that
there exist $n$ and $m$ such that $\tau_{>n}(p)=\tau_{>m}(q)$; compare \cite[1.4]{Smii}. This is an equivalence relation on $Q_{\infty}$. We denote
by $\widetilde{Q}_\infty$ the set of tail-equivalence classes, and for a path $p$ denote the corresponding class by $[p]$.

A left-infinite path $p$ is \emph{rational} provided that there exists  $n\geq 1$ such that $p\sim \tau_{>n}(p)$.
This is equivalent to that $p$ is tail-equivalent to a cyclic path.  In this case, we have that $p\sim q^\infty$ for a \emph{simple} oriented cycle $q$. Here, an oriented cycle is simple if it is not a power of a shorter oriented cycle. Otherwise, the path $p$ is called \emph{irrational}. This is equivalent to that for each pair $(n, m)$ of distinct natural numbers, we have $\tau_{>n}(p)\neq \tau_{>m}(p)$.

If a left-infinite path $p$ is rational (resp. irrational), then the corresponding class $[p]$ is called  a \emph{rational class} (resp. an \emph{irrational class}); such classes form a subset $\widetilde{Q}^{\rm rat}_\infty$ (resp. $\widetilde{Q}^{\rm irr}_\infty$) of $\widetilde{Q}_\infty$. Then we have a disjoint union $\widetilde{Q}_\infty=\widetilde{Q}^{\rm rat}_\infty \cup \widetilde{Q}_\infty^{\rm irr}$.

\subsection{Leavitt path algebras}
  Let $Q$ be a quiver and $k$ a field. Consider the set of formal symbols $\{\alpha^*\; |\; \alpha\in Q_1\}$. The \emph{Leavitt path algebra} $L_k(Q)$ of $Q$ with coefficients in $k$ is a $k$-algebra given by generators $\{e_i, \alpha, \alpha^*\; |\; i\in Q_0, \alpha\in Q_1\}$ subject to the following relations:
\begin{enumerate}
\item[(0)] $e_ie_j=\delta_{ij}\; e_i$ for all $i\in Q_0$;
\item[(1)] $e_{t(\alpha)}\alpha=\alpha=\alpha e_{s(\alpha)}$ for all $\alpha\in Q_1$;
\item[(2)] $e_{s(\alpha)}\alpha^*=\alpha^*=\alpha^* e_{t(\alpha)}$ for all $\alpha\in Q_1$;
\item[(3)] $\alpha \beta^*=\delta_{\alpha, \beta} \; e_{t(\alpha)}$ for all $\alpha, \beta\in Q_1$;
\item[(4)] $\sum_{\{\alpha\in Q_1\; |\; s(\alpha)=i\}} \alpha^*\alpha=e_i$ for all regular vertices $i\in Q_0$.
\end{enumerate}

The relations (3) and (4) are called the \emph{Cuntz-Krieger relations}. Here, we emphasize that  $k$-algebras
are not required to be unital.

We observe that $L_k(Q)$ is naturally $\mathbb{Z}$-graded such that ${\rm deg}\; e_i=0$, ${\rm deg}\; \alpha=1$ and
${\rm deg}\; \alpha^*=-1$. There is a natural graded algebra homomorphism $ \iota_Q\colon kQ\rightarrow L_k(Q)$ such that $\iota_Q(e_i)=e_i$ and $\iota_Q(\alpha)=\alpha$.  Here, we abuse the notation:  for a path $p\in kQ$ we denote its image $\iota_Q(p)$ still by $p$. Moreover, if $p=\alpha_n\cdots \alpha_2\alpha_1$, we set $p^*=\alpha_1^*\alpha_2^*\cdots \alpha_n^*$ in $L_k(Q)$. For convention, $e_i^*=e_i$ for $i\in Q_0$. The algebra homomorphism $\iota_Q$ is injective; see \cite[Lemma 1.6]{G09} or Proposition \ref{prop:embedding}.

The following fact is immediate from the the relation (3). Observe that for finite paths $p, q$ in $Q$, $p^*q=0$ if
$t(q)\neq t(p)$.

\begin{lem}\label{lem:multi} {\rm (\cite[Lemma 3.1]{Tom})} Let $p, q, \gamma$ and $\eta$ be finite paths in $Q$ with
$t(p)=t(q)$ and $t(\gamma)=t(\eta)$. Then in $L_k(Q)$ we have
$$(p^*q)(\gamma^*\eta)= \left\{ \begin{array} {r@{}l}
&(\gamma' p)^* \eta  \quad \mbox{ if } \gamma=\gamma' q; \\
& p^* \eta  \quad \quad \; \;  \mbox{ if } q=\gamma; \\
& p^*(q'\eta) \quad \mbox{ if } q=q'\gamma; \\
& 0 \quad \quad \quad \; \; \mbox{ otherwise.}
\end{array} \right.$$
Here, $\gamma'$ and $q'$ are some nontrivial  paths in $Q$. \hfill $\square$
\end{lem}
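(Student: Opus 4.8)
The plan is to reduce the whole computation to the single product $q\gamma^*$ and then evaluate the latter by induction, peeling off one arrow at a time via the Cuntz--Krieger relation (3). By associativity $(p^*q)(\gamma^*\eta)=p^*(q\gamma^*)\eta$, so it suffices to establish the auxiliary formula
\[
q\gamma^*=\begin{cases} \gamma'^* & \text{if } \gamma=\gamma'q \text{ with } \gamma' \text{ nontrivial},\\ e_{t(q)} & \text{if } q=\gamma,\\ q' & \text{if } q=q'\gamma \text{ with } q' \text{ nontrivial},\\ 0 & \text{otherwise,}\end{cases}
\]
and then to substitute it back. In the first case $p^*\gamma'^*\eta=(\gamma'p)^*\eta$, using that $s(\gamma')=t(q)=t(p)$ so that $\gamma'p$ is a genuine path and $(\gamma'p)^*=p^*\gamma'^*$; in the second case $p^*e_{t(q)}\eta=p^*\eta$ by relation (2) together with $t(q)=t(p)$; in the third case $p^*q'\eta=p^*(q'\eta)$, where $q'\eta$ is a path since $s(q')=t(\gamma)=t(\eta)$. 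These reproduce the four stated cases.

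To prove the auxiliary formula I would induct on $l(\gamma)$. For the base case $l(\gamma)=0$, write $\gamma=e_j$; then $q\gamma^*=qe_j$ equals $q$ when $s(q)=j$ (precisely the situation $q=q'\gamma$, or $q=\gamma$ when $q$ is itself trivial) and vanishes otherwise. For the inductive step write $\gamma=\bar\gamma\delta_1$ with $\delta_1\in Q_1$, so $\gamma^*=\delta_1^*\bar\gamma^*$. If $q$ is trivial, $q=e_i$, then relation (2) gives $q\gamma^*=\delta_{i,s(\gamma)}\,\gamma^*$, matching the first case (with $\gamma'=\gamma$) or the zero case. Otherwise write $q=\bar q\beta_1$ with $\beta_1\in Q_1$ and apply relation (3) to the innermost pair: $\beta_1\delta_1^*=\delta_{\beta_1,\delta_1}\,e_{t(\beta_1)}$. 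If $\beta_1\neq\delta_1$ the product is $0$, and this is genuinely the ``otherwise'' case, since any prefix relation between $q$ and $\gamma$ would force their innermost arrows to coincide. If $\beta_1=\delta_1$, then relation (1) absorbs the idempotent $e_{t(\beta_1)}$ into $\bar q$, yielding $q\gamma^*=\bar q\,\bar\gamma^*$, to which the induction hypothesis applies; since $q=\bar q\beta_1$ and $\gamma=\bar\gamma\beta_1$, the prefix relations for $(\bar q,\bar\gamma)$ are equivalent to those for $(q,\gamma)$, so the four cases carry over verbatim.

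The computation is elementary, the only real work being bookkeeping, so the main obstacle is organizational rather than conceptual. I expect the delicate points to be verifying that the four cases are exhaustive and mutually exclusive (so that the displayed formula is well defined), and handling the degenerate situations in which one path is trivial, where one must invoke relations (1) and (2) in the correct direction so that vertex idempotents are absorbed rather than left dangling. Once the auxiliary formula for $q\gamma^*$ is in place, the reassembly into $p^*(q\gamma^*)\eta$ is immediate.
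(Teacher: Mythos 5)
Your proof is correct. The paper itself gives no argument for this lemma---it cites \cite[Lemma 3.1]{Tom} and remarks that the statement is immediate from relation (3)---and your reduction to the product $q\gamma^*$, evaluated by cancelling innermost arrows one at a time via relation (3), is exactly that intended argument spelled out in full, with the degenerate (trivial-path) cases and the exclusivity of the four cases handled correctly.
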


We have the following immediate consequence; see \cite[Lemma 1.5]{AA05} or \cite[Corollary 3.2]{Tom}.

\begin{cor}\label{cor:1} The Leavitt path algebra $L_k(Q)$ is spanned by the
following set
\begin{center} \qquad \qquad \qquad $\{p^*q\; |\; p, q \mbox{ are finite paths in } Q \mbox{ with } t(p)=t(q) \}.$
\hfill $\square$ \end{center}
\end{cor}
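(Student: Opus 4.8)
The plan is to show that every element of $L_k(Q)$ can be written as a $k$-linear combination of terms of the form $p^*q$ where $p,q$ are finite paths (including trivial paths) with $t(p)=t(q)$. Since $L_k(Q)$ is generated as a $k$-algebra by the symbols $\{e_i,\alpha,\alpha^*\}$, and these generators already lie in the proposed spanning set (indeed $e_i=e_i^*e_i$, $\alpha=e_{t(\alpha)}^*\alpha$, and $\alpha^*=\alpha^*e_{s(\alpha)}$, each of the required form), it suffices to prove that the $k$-span $S$ of the set $\{p^*q\mid t(p)=t(q)\}$ is closed under multiplication. Then $S$ is a subalgebra containing all generators, hence $S=L_k(Q)$.

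The core of the argument is thus a closure computation, and this is exactly where Lemma \ref{lem:multi} does the work. Given two basis-type elements $p^*q$ and $\gamma^*\eta$ with $t(p)=t(q)$ and $t(\gamma)=t(\eta)$, their product $(p^*q)(\gamma^*\eta)$ is computed by the four-case formula of the lemma: in each nonzero case the result is again a single element of the form $(\text{path})^*(\text{path})$ with matching terminal vertices, and in the remaining case it is zero. For instance, in the case $\gamma=\gamma'q$ the product equals $(\gamma'p)^*\eta$, and one checks $t(\gamma'p)=t(\eta)$ so this lies in $S$; the other two nonzero cases are analogous. Hence the product of two spanning elements is again in $S$, and by bilinearity $S$ is closed under multiplication.

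I would organize the write-up as follows. First I note that the generators lie in $S$. Next I invoke Lemma \ref{lem:multi} to verify closure of $S$ under products of spanning elements, checking in each case that the terminal-vertex condition is preserved so that the output is a legitimate spanning element; extending bilinearly gives closure for arbitrary elements of $S$. Concluding, $S$ is a subalgebra of $L_k(Q)$ containing a generating set, so $S=L_k(Q)$, which is the assertion.

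I do not expect a genuine obstacle here, since the statement is billed as an immediate consequence of Lemma \ref{lem:multi}. The only point requiring mild care is bookkeeping around trivial paths: one must allow $p$ or $q$ to be a trivial path $e_i$ (with $e_i^*=e_i$) so that the generators $e_i,\alpha,\alpha^*$ genuinely appear as spanning elements, and one must confirm that Lemma \ref{lem:multi} continues to apply, or is applied together with relations (1) and (2), when some of the paths involved are trivial. Once these degenerate cases are handled, the closure argument goes through verbatim.
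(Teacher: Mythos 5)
Your proposal is correct and is exactly the paper's intended argument: the corollary is stated as an immediate consequence of Lemma \ref{lem:multi}, the point being precisely that the span of $\{p^*q \mid t(p)=t(q)\}$ contains the generators and is closed under multiplication by the four-case formula, so it is a subalgebra equal to $L_k(Q)$. One small slip to fix: by relation (2) the correct expression is $\alpha^*=\alpha^*e_{t(\alpha)}$ (i.e.\ $p=\alpha$, $q=e_{t(\alpha)}$), not $\alpha^*=\alpha^*e_{s(\alpha)}$, which violates the condition $t(p)=t(q)$ and is in fact zero unless $\alpha$ is a loop.
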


By Corollary \ref{cor:1}, a nonzero element $u$ in $L_k(Q)$ can be written in its \emph{normal form}
\begin{align}\label{equ:1}
u=\sum_{i=1}^l \lambda_i\;  p_i^*q_i,
\end{align}
 where $l\geq 1$, each $\lambda_i\in k$ is nonzero, and $p_i$'s and $q_i$'s are paths in $Q$ with
$t(p_i)=t(q_i)$. We require in addition that the pairs $(p_i, q_i)$ are pairwise distinct. The normal
form in general is not unique; see the relation (4).

 Inspired by the paragraphs following \cite[Lemma 1.7]{AA05},
we define a numerical invariant $\kappa(u)$ of $u$ as the smallest natural number $n_0$ such that in one of its normal forms
$u=\sum_{i=1}^l \lambda_i \; p_i^*q_i$, we have $l(p_i)\leq n_0$ for all $i$. For example, $\kappa(u)=0$ if and only if
$u$ can be written as $u=\sum_{i=1}^l \lambda_i q_i$ for some paths $q_i$, if and only if
$u$ lies in the image of $\iota_Q$; compare \cite[Definition 3.3]{Tom}.

The Leavitt path algebra $L_k(Q)$ in general is not unital. However, the set $\{e_i\; |\; i\in Q_0\}$ of pairwise
orthogonal idempotents is a set of \emph{local units} in the following sense: for a nonzero element $u=\sum_{i=1}^l \lambda_i \; p_i^*q_i$ in its normal form, set $x=\sum_{\{j\in Q_0\; |\; j=s(p_i) \mbox{ \small for some } i\}} e_j$ and $y=\sum_{\{j\in Q_0\; |\; j=s(q_i) \mbox{ \small for some }i\}} e_j$, then we have $u=xuy$. In particular, there exists some $j\in Q_0$ such that $e_ju\neq 0$. For details, we refer to \cite[Lemma 1.6]{AA05} or \cite[3.2]{Tom}.

\section{The construction of irreducible representations}

In this section, we construct two classes  of irreducible representations of Leavitt path algebras.
These constructed irreducible representations are  pairwise non-isomorphic.

\subsection{The representation $\mathcal{F}$}

Let $k$ be  a field, and let $Q$ be a quiver. We denote by $\mathcal{F}$ the vector space over $k$
with basis given by the set $Q_\infty$ of left-infinite paths in $Q$. For each tail-equivalence class $[p]$
in $\widetilde{Q}_\infty$, denote by $\mathcal{F}_{[p]}$ the subspace of $\mathcal{F}$ spanned by the set $\{q\; |\;
q\in [p]\}$.  Then we have $\mathcal{F}=\oplus_{[p]\in \widetilde{Q}_{\infty}} \mathcal{F}_{[p]}$.

We will construct a representation of the Leavitt path algebra $L_k(Q)$ on $\mathcal{F}$. We point out
that our construction is inspired by a construction in the proof of \cite[Theorem II]{LNS}.

 For each vertex $i\in Q_0$, define a linear map $P_i\colon \mathcal{F}\rightarrow \mathcal{F}$ such
that $$P_i(p)=\delta_{i, s(p)}\;  p$$
 for all $p\in Q_\infty$.

 For each arrow $\alpha\in Q_1$, define a linear map
$S_\alpha\colon \mathcal{F}\rightarrow \mathcal{F}$ such that
$$S_\alpha(p)=\delta_{\alpha, \alpha_1} \tau_{>1}(p)$$
for $p=\cdots \alpha_2\alpha_1\in Q_\infty$. We define another linear map $S^*_\alpha\colon \mathcal{F}\rightarrow \mathcal{F}$ such that $$S^*_\alpha(p)=\delta_{t(\alpha), s(p)}\; p\alpha=\delta_{t(\alpha), s(\alpha_1)}\; p\alpha.$$ Here, we recall by definition that $s(p)=s(\alpha_1)$.

\begin{prop}\label{prop:1}
There is an algebra homomorphism $\rho\colon L_k(Q)\rightarrow {\rm End}_k(\mathcal{F})$ such that $\rho(e_i)=P_i$,
$\rho(\alpha)=S_\alpha$ and $\rho(\alpha^*)=S^*_\alpha$ for all $i\in Q_0$ and $\alpha\in Q_1$.
\end{prop}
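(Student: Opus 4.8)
The plan is to use the universal property of the Leavitt path algebra: since $L_k(Q)$ is presented by the generators $\{e_i, \alpha, \alpha^*\}$ subject to the relations (0)--(4), the assignment $e_i \mapsto P_i$, $\alpha \mapsto S_\alpha$, $\alpha^* \mapsto S^*_\alpha$ extends to an algebra homomorphism $\rho$ if and only if the operators $P_i, S_\alpha, S^*_\alpha \in \mathrm{End}_k(\mathcal{F})$ satisfy those same relations (0)--(4). Thus the entire proof reduces to verifying these five families of relations, which I would carry out by evaluating each side on an arbitrary basis element $p = \cdots \alpha_2\alpha_1 \in Q_\infty$ and comparing; since $\mathcal{F}$ has basis $Q_\infty$, agreement on basis elements suffices. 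This is the standard argument for building representations of an algebra given by generators and relations, and it applies verbatim in the non-unital setting.

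Before checking the relations I would record that each prescribed operator is a well-defined $k$-linear endomorphism of $\mathcal{F}$. The map $P_i$ is the projection onto the span of the left-infinite paths starting at $i$; the map $S_\alpha$ sends $p$ to either $0$ or the left-infinite path $\tau_{>1}(p)$; and $S^*_\alpha$ sends $p$ to either $0$ or the concatenation $p\alpha$, which is a genuine left-infinite path precisely because the factor $\delta_{t(\alpha), s(p)}$ forces $t(\alpha) = s(\alpha_1)$. As each operator carries a basis element to either $0$ or another basis element, all three are unambiguously defined by linear extension.

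I would then verify the relations one at a time. Relations (0), (1) and (2) are immediate bookkeeping with the Kronecker symbols, using the identities $s(\tau_{>1}(p)) = t(\alpha_1)$ and $s(p\alpha) = s(\alpha)$. For relation (3), computing $S_\alpha S^*_\beta(p) = \delta_{t(\beta), s(p)}\, S_\alpha(p\beta) = \delta_{t(\beta), s(p)}\, \delta_{\alpha, \beta}\, p$ and noting that the prefactor collapses to $\delta_{\alpha,\beta}\, P_{t(\alpha)}(p)$ yields the Cuntz--Krieger relation $S_\alpha S^*_\beta = \delta_{\alpha,\beta}\, P_{t(\alpha)}$. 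For relation (4), I would compute $S^*_\alpha S_\alpha(p) = \delta_{\alpha, \alpha_1}\, p$, using that reattaching the removed first arrow recovers $p$; summing over all $\alpha$ with $s(\alpha) = i$ then gives $\delta_{i, s(p)}\, p = P_i(p)$.

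The one genuine subtlety, and the step I expect to require the most care, is relation (4): its left-hand side is an \emph{a priori} infinite sum of operators, so one must ensure that it defines an endomorphism of $\mathcal{F}$ at all. This is exactly where the hypothesis that $i$ is \emph{regular} enters — there are only finitely many arrows $\alpha$ with $s(\alpha) = i$, so the sum is finite and the operator identity is meaningful. (Evaluated at a fixed $p$ the sum has at most one nonzero term in any case, but regularity is what guarantees a well-defined operator, matching the scope of relation (4).) Once all five verifications are complete, the universal property delivers the homomorphism $\rho$ with the asserted values on the generators.
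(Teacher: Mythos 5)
Your proposal is correct and follows essentially the same route as the paper: both verify the defining relations (0)--(4) for the operators $P_i$, $S_\alpha$, $S^*_\alpha$ on the basis $Q_\infty$ and then invoke the presentation of $L_k(Q)$ by generators and relations, with the same key computations for the Cuntz--Krieger relations (3) and (4). Your added remarks on well-definedness and on the finiteness of the sum in (4) for regular vertices are sound elaborations of points the paper leaves implicit, not a different argument.
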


\begin{proof}
To see the existence of such a homomorphism, it suffices to show that the linear maps $P_i$, $S_\alpha$ and $S^*_\alpha$ satisfy
the defining relations of the Leavitt path algebra.

For (0), we observe that $P_i \circ P_j=\delta_{ij} P_i$.

For (1), we have that for $p=\cdots \alpha_2\alpha_1\in Q_\infty$, $$P_{t(\alpha)}S_\alpha(p)=\delta_{t(\alpha),s(\alpha_2)}\; \delta_{\alpha, \alpha_1} \tau_{>1}(p)=\delta_{\alpha, \alpha_1}\tau_{>1}(p)=S_\alpha(p).$$
 Here, we use that if $\alpha=\alpha_1$, then $t(\alpha)=s(\alpha_2)$. Similarly, we have
$S_\alpha \circ P_{s(\alpha)}=S_\alpha$. Similar arguments prove the relation (2).

For (3), we have that  $$S_\alpha S^*_\beta(p)= \delta_{\alpha, \beta} \delta_{t(\beta), s(p)}  \; \tau_{>1}(p\beta)=\delta_{\alpha, \beta} \delta_{t(\alpha), s(p)}\;  p=\delta_{\alpha, \beta} P_{t(\alpha)}(p).$$

For (4), we have that $$\sum_{\{\alpha\in Q_1\; |\; s(\alpha)=i\}} S^*_\alpha S_\alpha (p)=\sum_{\{\alpha\in Q_1\; |\; s(\alpha)=i\}} S^*_\alpha (\delta_{\alpha, \alpha_1}\;  \tau_{>1}(p)) = \delta_{i, s(p)}\; p =P_i(p).$$

 Then we are done.
\end{proof}

We denote the action of  $L_k(Q)$ on $\mathcal{F}$ by ``.", that is, $a.u=\rho(a) (u)$ for $a\in L_k(Q)$ and $u\in \mathcal{F}$.  For a nonzero element $u$ in $\mathcal{F}$, its \emph{normal form} means
the expression $u=\sum_{i=1}^l \lambda_i p_i$,  where each $\lambda_i\in k$ is nonzero and
the left-infinite paths $p_i$ are pairwise distinct.

The following result yields the first class of irreducible representations. In particular, the representation
$\mathcal{F}$ turns out to be completely reducible.

\begin{thm}\label{thm:1}
Consider the representation $\mathcal{F}$ of $L_k(Q)$. Then the following statements hold.
\begin{enumerate}
\item[(1)] For each $[p]\in \widetilde{Q}_\infty$, the subspace $\mathcal{F}_{[p]}\subseteq \mathcal{F}$ is an irreducible
sub representation, which satisfies that ${\rm End}_{L_k(Q)}(\mathcal{F}_{[p]})\simeq k$.
\item[(2)] Two representations $\mathcal{F}_{[p]}$ and $\mathcal{F}_{[q]}$ are isomorphic if and only if $[p]=[q]$.
\end{enumerate}
\end{thm}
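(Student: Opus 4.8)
The plan is to reduce everything to two elementary ``word operators'' on basis paths. For a finite path $a=\alpha_n\cdots\alpha_1$, the element $a\in L_k(Q)$ acts by $\rho(a)(w)=\tau_{>n}(w)$ when $\tau_{\leq n}(w)=a$ and by $0$ otherwise, so $\rho(a)$ deletes the initial segment $a$ from the right; dually $\rho(a^*)(w)=wa$ whenever $t(a)=s(w)$, so $\rho(a^*)$ prepends $a$. Consequently $\rho(a^*a)$ is the projection of $\mathcal{F}$ onto the span of those left-infinite paths $w$ with $\tau_{\leq n}(w)=a$. Since $\tau_{>1}(w)\sim w$ and $w\alpha\sim w$, every generator of $L_k(Q)$ carries a basis path of $[p]$ into $\mathcal{F}_{[p]}$ (or to $0$), so each $\mathcal{F}_{[p]}$ is a subrepresentation and $\mathcal{F}=\bigoplus_{[p]}\mathcal{F}_{[p]}$.

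For irreducibility I would argue in two moves. Given a nonzero $u=\sum_{i=1}^l\lambda_iq_i$ in normal form in $\mathcal{F}_{[p]}$, I first choose $n$ so large that the truncations $\tau_{\leq n}(q_i)$ are pairwise distinct finite paths (possible since distinct left-infinite paths differ in some finite position); applying $\rho(\tau_{\leq n}(q_1))$ then annihilates every term except the first and yields the single basis vector $\lambda_1\,\tau_{>n}(q_1)$. Thus the submodule generated by $u$ contains a basis path $r\in[p]$. For the second move, given any other basis path $q'\in[p]$, tail-equivalence supplies a common tail $w=\tau_{>s}(r)=\tau_{>t}(q')$, so writing $r=wc$ and $q'=wd$ with $c=\tau_{\leq s}(r)$, $d=\tau_{\leq t}(q')$ (finite paths with $t(c)=t(d)=s(w)$) gives $\rho(d^*c)(r)=q'$. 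Hence the submodule is all of $\mathcal{F}_{[p]}$.

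For the endomorphism ring the subtle point is upgrading Schur's lemma from ``division ring'' to exactly $k$. Let $\phi\in\mathrm{End}_{L_k(Q)}(\mathcal{F}_{[p]})$ and fix a basis path $q_0\in[p]$. Since $\phi$ commutes with every projection $\rho(a_n^*a_n)$ for $a_n=\tau_{\leq n}(q_0)$, and $\rho(a_n^*a_n)(q_0)=q_0$, every basis path occurring in $\phi(q_0)$ must share the prefix $a_n$ with $q_0$ for all $n$; as a left-infinite path is determined by its finite truncations, this forces $\phi(q_0)=\mu q_0$ for a single scalar $\mu\in k$. Applying $\phi$ to $q'=\rho(d^*c)(q_0)$ and using $L_k(Q)$-linearity shows $\phi(q')=\mu q'$ with the same $\mu$, so $\phi=\mu\,\mathrm{id}$ and $\mathrm{End}_{L_k(Q)}(\mathcal{F}_{[p]})\simeq k$. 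I expect this separation-by-truncations step to be the main obstacle, since it is precisely what rules out a larger division ring over a non-closed field.

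Finally, for statement (2) the ``if'' direction is immediate from $\mathcal{F}_{[p]}=\mathcal{F}_{[q]}$. For the converse, an isomorphism $\phi\colon\mathcal{F}_{[p]}\to\mathcal{F}_{[q]}$ applied to a basis path $p_0\in[p]$ gives a nonzero element whose every constituent $w$ satisfies $\tau_{\leq n}(w)=\tau_{\leq n}(p_0)$ for all $n$, by the same commuting-projection argument, whence $w=p_0$. But $w$ lies in $[q]$, so $p_0\in[p]\cap[q]$ and therefore $[p]=[q]$.
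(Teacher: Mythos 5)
Your proposal is correct and follows essentially the same route as the paper: truncation operators $\rho(\tau_{\leq n}(q_1))$ isolate a single basis path, and the prepend/delete elements $d^*c$ transport it to any tail-equivalent path, exactly as in the paper's proof of irreducibility and of the constancy of the scalar. The only cosmetic difference is in showing that homomorphisms act diagonally on basis paths: you use the commuting projections $a_n^*a_n$, while the paper derives a contradiction from a separating truncation ($x.p'=0$ but $x.\phi(p')\neq 0$); both hinge on the same fact that a left-infinite path is determined by its finite truncations.
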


\begin{proof} To see that $\mathcal{F}_{[p]}\subseteq \mathcal{F}$ is a sub representation, it suffices to notice that
for each left-infinite path $p$, $p\sim \tau_{>1}(p)$ and $p\sim p\alpha$ for all arrows $\alpha$ with $t(\alpha)=s(p)$.

To prove that the representation $\mathcal{F}_{[p]}$ is irreducible, take a nonzero sub representation $U\subseteq \mathcal{F}_{[p]}$, and a nonzero element $u=\sum_{i=1}^l \lambda_ip_i$ in $U$.  Here, the expression of $u$ is its normal form.
Take $n$ large enough such that all the $\tau_{\leq n}(p_i)$'s  are pairwise distinct. Then
we have $\tau_{\leq n}(p_1).u=\tau_{\leq n} (p_1). (\lambda_1 p_1)=\lambda_1 \; \tau_{>n}(p_1)$.  This proves that $p_0=\tau_{>n}(p_1)$ lies in $U$. We claim that each $p'\in [p]$ lies
in $U$. Then we are done. We observe  that $p'\sim p_0$. Assume that $\tau_{>r}(p')=\tau_{>s}(p_0)$. The equalities
$\tau_{>s}(p_0)=\tau_{\leq s}(p_0).p_0$ and $p'=(\tau_{\leq r}(p'))^*.\tau_{>r}(p')$ force that $p'$ lies in $U$.

Consider a nonzero homomorphism $\phi\colon \mathcal{F}_{[p]}\rightarrow \mathcal{F}_{[q]}$. Since $\mathcal{F}_{[p]}$
is irreducible, $\phi$ is injective. Let $p'\in [p]$ and write $\phi(p')=\sum_{i=1}^l \lambda_i q_i$ in its normal form.  We claim that $l=1$ and $q_1=p'$. Otherwise, we may assume that $q_1\neq p'$.
Take $n$ large enough such that   all the $\tau_{\leq n}(q_i)$'s  are pairwise distinct and that $x=\tau_{\leq n}(q_1)\neq \tau_{\leq n}(p')$. Then $x.p'=0$ and $x.\phi(p')=x.(\lambda_1 q_1)=\lambda_1 \; \tau_{>n}(q_1)\neq 0$. A contradiction!

The above claim proves (2). Moreover, we have that a nonzero endomorphism $\phi\colon \mathcal{F}_{[p]}\rightarrow \mathcal{F}_{[p]}$ necessarily satisfies that $\phi(p')=\lambda_{p'} \; p'$ with $\lambda_{p'}\in k$ for all $p'\in [p]$. It remains to see that all the $\lambda_{p'}$'s  are the same, and then we have ${\rm End}_{L_k(Q)}(\mathcal{F}_{[p]})\simeq k$. Take $p'$ and $p''$ in $[p]$. We assume that  $\tau_{>r}(p')=\tau_{>s}(p'')$. We deduce from the equalities
$\tau_{>s}(p'')=\tau_{\leq s}(p'').p''$ and $p'=(\tau_{\leq r}(p'))^*.\tau_{>r}(p')$  that $\lambda_{p'}=\lambda_{p''}$.
\end{proof}

\begin{exm}\label{exm:Leavitt}
 {\rm Let $n\geq 1$ and let $Q=R_n$ be the quiver consisting of one vertex and $n$ loops attached to it. Then the Leavitt
path algebra $L_k(R_n)$ is the \emph{Leavitt algebra} of order $n$ (\cite{Lev57, Lev}).

Consider the case $n=1$. The algebra $L_k(R_1)$ is isomorphic to the Laurant polynomial algebra $k[x, x^{-1}]$. Here, the set $Q_\infty$ consists of a single element, and then the representation $\mathcal{F}$ is irreducible. In fact,  $\mathcal{F}$ is one-dimensional, on which $x$ acts as the identity.

Consider the case $n\geq 2$. Then the set $\widetilde{Q}_\infty$ of tail-equivalence classes is uncountable. So we obtain a uncountable family of irreducible representations $\mathcal{F}_{[p]}$ for the Leavitt algebra.}
\end{exm}

\subsection{The representation $\mathcal{N}$}

Let $k$ be a field and  let $Q$ be a quiver. Denote by $Q_0^s$ the set consisting of all sinks in $Q$.  Denote by $\mathcal{N}$
 the vector space over $k$ with basis given by all the finite paths in $Q$ that terminate at a sink. For each sink $i$,
denote by $\mathcal{N}_i$ the subspace of $\mathcal{N}$ spanned by paths $p$ with $t(p)=i$. Then we have
$\mathcal{N}=\oplus_{i\in Q^s_0}\;  \mathcal{N}_i$.

We will define a representation of $L_k(Q)$ on $\mathcal{N}$. The construction is similar to the one in
the previous subsection.

 For each vertex $i\in Q_0$, define a linear
map $P_i\colon \mathcal{N}\rightarrow \mathcal{N}$ such that $$P_i(p)=\delta_{i, s(p)} \; p$$
for finite paths $p$ terminating at some sink.

For each arrow
$\alpha\in Q_1$, define a linear map $S_\alpha\colon \colon \mathcal{N}\rightarrow \mathcal{N}$
as follows:
$$S_\alpha(p)=0 \mbox{ if } l(p)=0, \mbox{ and }  S_\alpha(p)=\delta_{\alpha, \alpha_1} \; \alpha_n\cdots \alpha_2$$
for $p=\alpha_n \cdots \alpha_2\alpha_1$. We define another linear map $S^*_\alpha\colon \mathcal{N}\rightarrow \mathcal{N}$
such that
  $$S^*_\alpha(p)= \delta_{t(\alpha), s(p)} \; p\alpha=\delta_{t(\alpha), s(\alpha_1)} \; p\alpha.$$

\begin{prop}
There is an algebra homomorphism $\psi \colon L_k(Q)\rightarrow {\rm End}_k(\mathcal{N})$ such that $\psi(e_i)=P_i$,
$\psi(\alpha)=S_\alpha$ and $\psi(\alpha^*)=S^*_\alpha$ for all $i\in Q_0$ and $\alpha\in Q_1$.
\end{prop}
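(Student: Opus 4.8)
The plan is to mimic exactly the proof of Proposition \ref{prop:1}: to produce the homomorphism $\psi$ it suffices to verify that the linear operators $P_i$, $S_\alpha$, $S^*_\alpha$ on $\mathcal{N}$ satisfy the five defining relations (0)--(4) of $L_k(Q)$, since then by the universal property of the presentation there is a unique algebra homomorphism sending the generators as prescribed. Because $\mathcal{N}$ has a basis of finite paths terminating at sinks, I would verify each relation by evaluating both sides on an arbitrary basis element $p$ and comparing, splitting into the two cases $l(p)=0$ (so $p=e_i$ for a sink $i$) and $l(p)\geq 1$ (so $p=\alpha_n\cdots\alpha_2\alpha_1$).

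Relations (0), (1), (2) are formal bookkeeping about source and target vertices and are checked just as in Proposition \ref{prop:1}; the only mild novelty is keeping track of the degenerate length-zero case, where $S_\alpha$ is defined to be $0$. For relation (3), applying $S_\alpha S^*_\beta$ to $p$ gives
$$S_\alpha S^*_\beta(p)=\delta_{t(\beta),s(p)}\, S_\alpha(p\beta)=\delta_{t(\beta),s(p)}\,\delta_{\alpha,\beta}\, p=\delta_{\alpha,\beta}\, P_{t(\alpha)}(p),$$
where prepending the arrow $\beta$ raises the length by one so the $l=0$ obstruction in the definition of $S_\alpha$ never interferes, and the condition $t(\beta)=s(p)$ forces the newly-created first arrow of $p\beta$ to be $\beta$. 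The one genuinely delicate point, and the step I expect to be the main obstacle, is relation (4): for a regular vertex $i$ I must check
$$\sum_{\{\alpha\in Q_1\,|\,s(\alpha)=i\}} S^*_\alpha S_\alpha(p)=P_i(p).$$
Here the care is that $\mathcal{N}$ consists of paths ending at sinks rather than infinite paths, so when $l(p)=0$ the path is a trivial path $e_j$ at a \emph{sink} $j$; since $i$ is regular it is not a sink, so $s(p)=j\ne i$ forces both sides to vanish, and moreover $S_\alpha(e_j)=0$ by definition, so the left side is automatically zero. When $l(p)\geq 1$ with $p=\alpha_n\cdots\alpha_1$, only the term $\alpha=\alpha_1$ survives in $S_\alpha(p)$, and then $S^*_{\alpha_1}$ restores the arrow provided $s(\alpha_1)=i$, yielding $\delta_{i,s(p)}\,p=P_i(p)$.

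The reason relation (4) is the crux is precisely that it is only imposed at regular vertices, so I must confirm that at every regular vertex the truncation-then-restoration on $\mathcal{N}$ behaves correctly and that the excluded sinks and infinite emitters cause no mismatch; the definition of $\mathcal{N}$ via paths terminating at sinks is compatible because an emitted arrow from a regular vertex keeps the terminal sink unchanged, so all the operators genuinely preserve $\mathcal{N}$ (and even each summand $\mathcal{N}_i$). Once all five relations are verified on basis elements, linearity extends them to all of $\mathcal{N}$ and the existence of $\psi$ follows, completing the proof.
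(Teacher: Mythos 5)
Your proposal is correct and follows essentially the same route as the paper: the paper's proof simply says the verification is analogous to that of Proposition \ref{prop:1}, singling out exactly the point you identify as the crux, namely that in relation (4) the trivial-path case is handled by $P_i(e_j)=0$ for $i$ regular and $j$ a sink (together with $S_\alpha(e_j)=0$ by definition). Your case-by-case check of the remaining relations matches the intended argument, so nothing further is needed.
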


\begin{proof}
The proof is similar to the one of Proposition \ref{prop:1}. We note that in verifying the relation (4), we use that $P_i(e_j)=0$
for $i$ regular and $j\in Q_0^s$.
\end{proof}

The following result gives us the second class of irreducible representations of the Leavitt path
algebra. In particular, the representation $\mathcal{N}$ turns out to be completely reducible.

\begin{thm}\label{thm:N}
Consider the representation $\mathcal{N}$ of $L_k(Q)$. Then the following statements hold.
\begin{enumerate}
\item[(1)] For each $i\in Q_0^s$, the subspace $\mathcal{N}_{i}\subseteq \mathcal{N}$ is an irreducible
sub representation, which satisfies that ${\rm End}_{L_k(Q)}(\mathcal{N}_{i})\simeq k$.
\item[(2)] Two representations $\mathcal{N}_{i}$ and $\mathcal{N}_{j}$ are isomorphic if and only if $i=j$.
\item[(3)] For any $[p]\in \widetilde{Q}_\infty$ and $i\in Q_0^s$, $\mathcal{F}_{[p]}$ is not isomorphic to $\mathcal{N}_i$.
\end{enumerate}
\end{thm}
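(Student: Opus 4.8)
The plan is to prove Theorem \ref{thm:N} by a strategy that closely parallels the proof of Theorem \ref{thm:1}, adapting it to the fact that paths in $\mathcal{N}$ are finite and terminate at sinks. For part (1), to show $\mathcal{N}_i$ is irreducible, I would take a nonzero subrepresentation $U\subseteq \mathcal{N}_i$ and a nonzero element $u=\sum_{j=1}^l \lambda_j p_j$ in its normal form. Since every $p_j$ terminates at the sink $i$, the key operation is to strip off initial arrows: acting by $\tau_{\leq n}(p_1)$ for $n=l(p_1)$ sufficiently large that the relevant initial segments separate the paths, I expect to isolate a scalar multiple of the trivial path $e_i$ at the sink $i$. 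Concretely, if $p_1$ has length $n$ then $\tau_{\leq n}(p_1).p_1 = e_i$, so $e_i\in U$. Then to recover an arbitrary path $p'$ with $t(p')=i$, I would apply the ``starred'' operators: since $S^*_\alpha$ prepends an arrow, I expect $(p')^*.e_i = p'$ (up to the relevant $\delta$-conditions checking that the terminating/starting vertices match), so every such $p'$ lies in $U$, giving $U=\mathcal{N}_i$. The endomorphism computation ${\rm End}_{L_k(Q)}(\mathcal{N}_i)\simeq k$ follows the same template as in Theorem \ref{thm:1}: a nonzero endomorphism must send each path to a scalar multiple of itself, and the scalars must all agree because any two paths terminating at $i$ are linked by a chain of the stripping and prepending operators.

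For part (2), I would argue that a nonzero (hence injective, by irreducibility) homomorphism $\phi\colon \mathcal{N}_i\to\mathcal{N}_j$ forces $i=j$. The cleanest approach is to examine where $\phi$ sends the trivial path $e_i$: since $e_i$ is annihilated by every $S_\alpha$ (as $S_\alpha$ vanishes on length-zero paths) and is fixed by $P_i$, its image must be supported on paths sharing these structural properties, which pins down the target to $e_j$ with $i=j$; a contradiction argument analogous to the one in Theorem \ref{thm:1} rules out any longer-path component in $\phi(e_i)$. Part (3) is the genuinely new statement, and this is where I expect the main obstacle to lie. The natural strategy is to find an algebraic feature that distinguishes the two families of representations intrinsically. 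The decisive difference is that in $\mathcal{N}_i$ the trivial path $e_i$ is a vector killed by every arrow operator $S_\alpha$ (reflecting that $i$ is a sink), whereas in $\mathcal{F}_{[p]}$ every basis vector is a left-infinite path and applying some appropriate $S_\alpha$ never annihilates the whole span — indeed for any left-infinite path one can always strip the first arrow, so no nonzero vector is simultaneously annihilated by all $S_\alpha$ in the way $e_i$ is.

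Thus the plan for (3) is to show that $\mathcal{N}_i$ contains a nonzero vector $v$ with $S_\alpha.v=0$ for \emph{all} $\alpha\in Q_1$ (namely $v=e_i$), and then to verify that $\mathcal{F}_{[p]}$ contains no such vector, so the two cannot be isomorphic. The verification in $\mathcal{F}_{[p]}$ is where I would spend the most care: given a nonzero $u=\sum_j \lambda_j q_j$ in normal form, I would choose $n$ large enough to separate the initial arrows $\tau_{\leq 1}(q_j)$ and argue that $S_{\alpha}.u\neq 0$ for $\alpha$ equal to the first arrow of some $q_j$, since $S_\alpha$ deletes that leading arrow and produces a nonzero left-infinite path. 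Because the identity $e_i$ is annihilated by all $S_\alpha$ in $\mathcal{N}_i$ but no nonzero element of $\mathcal{F}_{[p]}$ has this property, any isomorphism would have to carry $e_i$ to such a vector, which is impossible. Alternatively — and perhaps more robustly — one can phrase the invariant as: $\mathcal{N}_i$ has a basis vector annihilated by $\bigl(\sum_{\alpha} S_\alpha^* S_\alpha\bigr)$-type sink relations, detecting that the action factors through a sink vertex, a property the $\mathcal{F}_{[p]}$ do not share. The main difficulty is choosing the cleanest separating invariant and confirming it is genuinely preserved by module isomorphisms; the sink-annihilation property is the most transparent candidate and is the one I would pursue.
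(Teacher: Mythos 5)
Your overall strategy is the paper's: strip a nonzero element down to the generator $e_i$, regenerate $\mathcal{N}_i$ from $e_i$ with the starred operators, and exploit the fact that $e_i$ is killed by every $S_\alpha$ to handle (2) and (3). Parts (2) and (3) are essentially correct as you state them. In (3), the paper shows directly that every homomorphism $\mathcal{N}_i\to\mathcal{F}_{[p]}$ sends $e_i$ to $0$ and hence vanishes; this is the same argument as your ``no nonzero vector of $\mathcal{F}_{[p]}$ is annihilated by all $S_\alpha$'' invariant. The cancellation issue you flag there is settled in one line: for a fixed arrow $\alpha$, stripping $\alpha$ from the left-infinite paths that begin with it is injective, so $S_\alpha.u\neq 0$ whenever some path in the normal form of $u$ begins with $\alpha$.

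The genuine gap is in part (1). You act by $\tau_{\leq n}(p_1)=p_1$ (with $n=l(p_1)$) and conclude $e_i\in U$, having chosen $n$ ``large enough that the initial segments separate the paths.'' This separation device is borrowed from the proof of Theorem \ref{thm:1}, where it works because two distinct left-infinite paths have distinct truncations $\tau_{\leq n}$ for all large $n$. For finite paths it fails: one path can be an initial segment of another (e.g.\ $p_1=\alpha$ and $p_2=\beta\alpha$), and then no truncation separates them. Acting by $p_1$ gives $p_1.u=\lambda_1 e_i+\sum_j \lambda_j\, p_j'$, where the sum runs over those $j\neq 1$ with $p_j=p_j'p_1$ for a nontrivial path $p_j'$, and this element need not be a multiple of $e_i$, so ``hence $e_i\in U$'' does not follow as written. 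The correct move, which is exactly what the paper does, is to choose $p_1$ of \emph{maximal length} among the $p_j$'s: then no other $p_j$ can begin with $p_1$, so $p_1.p_j=0$ for all $j\neq 1$ and $p_1.u=\lambda_1 e_i$ on the nose. (Alternatively one can iterate your computation, since each application strictly decreases the maximal length occurring, but choosing a longest $p_1$ makes it a one-step argument.) With this repair, the rest of your outline --- $(p')^*.e_i=p'$ for every path $p'$ ending at $i$, hence $U=\mathcal{N}_i$, and the analysis of $\phi(e_i)$ for homomorphisms --- goes through and coincides with the paper's proof.
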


\begin{proof}
The subspace $\mathcal{N}_i\subseteq \mathcal{N}$ is clearly a sub representation, and it is generated by
the trivial path $e_i$.

For the
irreducibility of $\mathcal{N}_i$, take a nonzero sub representation $U\subseteq \mathcal{N}_i$ and  a nonzero element $u=\sum_{j=1}^l \lambda_j p_j\in U$ in its normal form. That is, each $\lambda_j\in k$ is nonzero, and all the $p_j$'s are pairwise distinct  satisfying that $t(p_j)=i$.  We choose the normal form   such that $p_1$ is longest among all the $p_j$'s (such $p_1$
need not be unique). Then we have $p_1.u=\lambda_1 e_i$. This forces that $e_i\in U$, from which we infer $U=\mathcal{N}_i$. Here, we use ``." to denote the action of
$L_k(Q)$ on $\mathcal{N}$.

Take a nonzero homomorphism $\phi\colon \mathcal{N}_i\rightarrow \mathcal{N}_j$, which is necessarily injective.
Write  $\phi(e_i)=\sum_{r=1}^l \lambda_r p_r$ in its normal form. We claim that $l=1$ and $p_1=e_i$. This  will force that $i=j$ and ${\rm End}_{L_k(Q)}(\mathcal{N}_i)\simeq k$. For the claim,
we assume the converse. Then we may assume that $p_1$ is longest among all the $p_r$'s. In particular, we have $l(p_1)\geq 1$.
Then $p_1.e_i=0$ and $p_1.\phi(e_i)=\lambda_1 e_j\neq 0$. A contradiction!

For (3), it suffices to show that each homomorphism  $\phi\colon \mathcal{N}_i\rightarrow \mathcal{F}_{[p]}$
satisfies that $\phi(e_i)=0$, and then $\phi=0$. Otherwise,  write the nonzero element $\phi(e_i)=\sum_{j=1}^l \lambda_j p_j$  in its normal form. Here, all the $p_j$'s lie in $[p]$. Take $n$ large enough such that all the truncations $\tau_{\leq n}(p_j)$'s  are pairwise distinct. Then $\tau_{\leq n}(p_1).e_i=0$ and $\tau_{\leq n}(p_1). \phi(e_i)=\lambda_1 \; \tau_{>n}(p_1)\neq 0$. This is absurd.
\end{proof}

\begin{rem}
We will show that the irreducible representations $\mathcal{N}_i$ are isomorphic to minimal left ideals of the Leavitt path algebra; see Proposition \ref{prop:minimal}(2).
\end{rem}

\section{Some  consequences, minimal left ideals and a faithfulness result}

In this section, we draw some consequences from  the constructed representations $\mathcal{F}$ and
$\mathcal{N}$. We show that for a countable quiver, the constructed irreducible representations contain all minimal left
  ideals of the Leavitt path algebra. We prove that for a certain quiver, the completely reducible representation $\mathcal{F}\oplus \mathcal{N}$ is faithful.

\subsection{Some consequences} The following result extends slightly a result contained in the proof of \cite[Theorem 5.4]{Smi}. We point out that the injectivity of $\iota_Q$ is known; see \cite[Lemma 1.6]{G09}.

\begin{prop}\label{prop:embedding}
Let $Q$ be an arbitrary  quiver. Then for  $n, m\geq 0$, the following subset of $L_k(Q)$
$$\{p^*q\; |\; p, q \mbox{ are paths in } Q \mbox{ with }t(p)=t(q), l(p)=m \mbox{ and } l(q)=n\}$$
is linearly independent. In particular, the algebra homomorphism $\iota_Q\colon kQ\rightarrow L_k(Q)$ is injective.
\end{prop}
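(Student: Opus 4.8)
The plan is to prove linear independence by exhibiting, for each pair $(p,q)$ in the given set, a sufficiently explicit way to ``read off'' its coefficient from a hypothetical linear dependence, using the representations $\mathcal{F}$ and $\mathcal{N}$ constructed in Section 3 as test spaces. The starting point is that a linear dependence among the $p^*q$ with $l(p)=m$ and $l(q)=n$ gives an element that is zero in $L_k(Q)$, hence acts as zero on both $\mathcal{F}$ and $\mathcal{N}$. Suppose $\sum_i \lambda_i\, p_i^* q_i = 0$ with all $(p_i,q_i)$ distinct, $l(p_i)=m$, $l(q_i)=n$, $t(p_i)=t(q_i)$, and not all $\lambda_i$ zero. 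I want to apply $\rho$ (or $\psi$) and feed in a carefully chosen basis vector to isolate one coefficient.

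First I would recall the action formulas. For a finite path $q=\beta_n\cdots\beta_1$ and a left-infinite (or long finite) path $w$, the operator $\rho(q)=S_{\beta_n}\cdots S_{\beta_1}$ deletes the suffix $q$ from $w$ when $w$ begins with $q$, and gives $0$ otherwise; dually $\rho(p^*)=S_{\alpha_1}^*\cdots$ prepends $p$ when the source of $w$ matches $t(p)$. Thus $\rho(p^*q)$ sends a path $w$ beginning with $q$ to the path $p\,(\tau_{>n}w)$, i.e.\ it strips off the initial segment $q$ and glues on $p$ in front. The key combinatorial observation is then: if I pick a test vector $w$ whose initial segment of length $n$ is exactly $q_{i_0}$, the nonzero contributions to $\sum_i \lambda_i\,\rho(p_i^*q_i)(w)$ come only from those $i$ with $q_i=q_{i_0}$, and each such term produces the path $p_i\,(\tau_{>n}w)$. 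Since the $p_i$ all have the \emph{same} length $m$, and $\tau_{>n}w$ is common to all of them, the resulting paths $p_i\,(\tau_{>n}w)$ are distinct exactly when the $p_i$ are distinct. Hence among the indices with $q_i=q_{i_0}$, the images are linearly independent basis vectors, and for the total sum to vanish we must have $\lambda_i=0$ for every $i$ with $q_i=q_{i_0}$. Running $q_{i_0}$ over all the $q_i$ appearing forces every $\lambda_i=0$, a contradiction.

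The one point requiring care is choosing a legitimate test vector $w$ that actually extends $q_{i_0}$: I need an infinite path (to use $\mathcal{F}$) or a finite path terminating at a sink (to use $\mathcal{N}$) whose first $n$ arrows spell $q_{i_0}$. If the vertex $s(q_{i_0})$ admits \emph{some} left-infinite path continuing it, I take $w\in Q_\infty$ beginning with $q_{i_0}$ and argue in $\mathcal{F}$; the images $p_i(\tau_{>n}w)$ are distinct left-infinite paths and the argument above applies verbatim. If instead $s(q_{i_0})$ is a sink (so no arrow continues the path), then $q_{i_0}$ itself is already a path terminating at a sink and I work in $\mathcal{N}$, taking $w=q_{i_0}$ so that $\tau_{>n}w=e_{s(q_{i_0})}$ and the images are the finite paths $p_i\in\mathcal{N}_{t(p_i)}$, again distinct basis vectors. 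In general $s(q_{i_0})$ might be neither a sink nor the source of an infinite path, but every vertex either is a sink or emits at least one arrow, and following emitted arrows either reaches a sink (giving a finite path to a sink, use $\mathcal{N}$) or continues forever (giving a left-infinite path, use $\mathcal{F}$); so a suitable $w$ always exists in one of the two spaces.

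I expect the main obstacle to be exactly this existence-of-test-vector issue, phrased cleanly so as to cover all vertices of an arbitrary quiver simultaneously, and to double-check that the ``strip $q$, prepend $p$'' description of $\rho(p^*q)$ is valid even when $n=0$ or $m=0$ (the trivial-path conventions $\tau_{\le 0}(w)=e_{s(w)}$, $e_i^*=e_i$, and the fact that $\rho(e_i)=P_i$ is a coordinate projection should make the boundary cases go through, but they deserve an explicit line). Once linear independence of the displayed set is established, the final claim is immediate: the map $\iota_Q$ sends the basis path $q\in Q_n$ to $e_{t(q)}^*\,q=q$, which is the special case $m=0$, $p=e_{t(q)}$; linear independence of $\{q\mid l(q)=n\}$ in the image for every $n$, together with the $\mathbb{Z}$-grading of $L_k(Q)$ (degree $n-m$) that separates the contributions of different $n$, shows $\iota_Q$ is injective. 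So $\iota_Q$ injective follows as a corollary of the $m=0$ case.
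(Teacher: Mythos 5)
Your proposal is correct and is essentially the paper's own proof: the paper likewise tests the element $\sum_i \lambda_i\, p_i^* q_i$ against the representations $\mathcal{F}$ and $\mathcal{N}$, feeding in a vector $pq_1$ where $p$ is either a path from $t(q_1)$ to a sink or a left-infinite path starting at $t(q_1)$ (the same dichotomy you spell out), and uses exactly your observation that the common length of the $p_i$ makes the glued paths $pp_i$ pairwise distinct basis vectors; the deduction of injectivity of $\iota_Q$ from the grading and the case $m=0$ is also the same. The only point to correct is notational: under the paper's conventions the vertex from which a continuation of $q_{i_0}$ must start is the terminus $t(q_{i_0})$, not $s(q_{i_0})$.
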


\begin{proof}
The second statement is an immediate consequence of the first one, once we notice that
the homomorphism $\iota_Q$ preserves the gradings, and that $\{q\; |\; l(q)=n\}$ is a basis of
$kQ_n$. Here, we use that $\iota_Q(q)=e_{t(q)}^*q$.

We will show that each element $u=\sum_{i=1}^l \lambda_i \; p_i^*q_i$ in $L_k(Q)$  is nonzero, where each $\lambda_i \in k$ is nonzero and
the pairs $(p_i, q_i)$ are pairwise distinct.  We require that each $(p_i)^*q_i$ is in the above subset. Consider
the terminating vertex $t(q_1)$ of $q_1$. Then we are in two cases. In the first case, there is a path $p$ with $s(p)=t(q_1)$ and $t(p)$ a sink. Consider the element $pq_1$ in $\mathcal{N}_{t(p)}$. We have that $$u. (pq_1)=\sum_{\{i\;|\; 1\leq i\leq l, q_i=q_1 \}} \lambda_i \; pp_i.$$
 Observe that
the  paths $pp_i$'s in the summation are pairwise distinct. Then $u.(pq_1)\neq 0$, which forces that $u\neq 0$.  In
the second case, there is a left-infinite path $p$ with $s(p)=t(q_1)$. Consider the element $pq_1$ in $\mathcal{F}_{[p]}$. Then the same argument as in the first case will work.
\end{proof}

The following observation in the finite case is implicitly contained in \cite[Section 3]{AAS}. Recall that for
a quiver $Q$, $Q_0^s$ denotes the set of all sinks in $Q$.

\begin{prop}\label{prop:sink}
Let $Q$ be an arbitrary quiver. Then the following subset of $L_k(Q)$
$$\{p^*q\; |\; p, q \mbox{ are finite paths in } Q \mbox{ with } t(p)=t(q)\in Q_0^s\}$$
is linearly independent. \end{prop}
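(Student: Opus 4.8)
The plan is to detect the putative linear dependence inside the representation $\psi\colon L_k(Q)\to {\rm End}_k(\mathcal{N})$ constructed above, by evaluating it on a single, carefully chosen basis vector of $\mathcal{N}$ and reading off exactly one coefficient. This is legitimate precisely because the terminal vertices are sinks: for a sink $i$, a path terminating at $i$ is a basis vector of $\mathcal{N}$, so both $p_i$ and $q_i$ live in $\mathcal{N}$.

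First I would record how a generator $p^*q$ acts on $\mathcal{N}$. A direct computation from the definitions of $S_\alpha$ and $S^*_\alpha$ shows that, for a finite path $w$ terminating at a sink, one has $(p^*q).w = w'p$ whenever $w = w'q$ for a (necessarily unique) path $w'$, and $(p^*q).w = 0$ otherwise. Note that $w = w'q$ forces $s(w')=t(q)=t(p)$, so that $w'p$ is again a path terminating at the same sink; in words, $p^*q$ strips the right-hand factor $q$ off $w$ and replaces it by $p$.

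Now suppose $u = \sum_{i=1}^{l}\lambda_i\, p_i^*q_i$ with all $\lambda_i\neq 0$, the pairs $(p_i,q_i)$ pairwise distinct, and $t(p_i)=t(q_i)\in Q_0^s$ for each $i$; I must show $u\neq 0$, equivalently $\psi(u)\neq 0$. I would choose an index $i_0$ for which $l(q_{i_0})$ is \emph{minimal} among the lengths $l(q_i)$, and evaluate $u$ on the basis vector $q_{i_0}\in\mathcal{N}$. By the action formula, the summand $\lambda_i\, p_i^*q_i$ contributes $\lambda_i$ to the coefficient of the basis path $p_{i_0}$ in $u.q_{i_0}$ exactly when $q_{i_0}=w'q_i$ and $w'p_i=p_{i_0}$ for the associated path $w'$. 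If such a $w'$ were nontrivial, then $l(q_i)=l(q_{i_0})-l(w')<l(q_{i_0})$, which is excluded by the minimality of $l(q_{i_0})$; hence $w'=e_{t(q_{i_0})}$, whence $q_i=q_{i_0}$ and $p_i=p_{i_0}$, and therefore $i=i_0$ by the distinctness of the pairs. Thus the coefficient of $p_{i_0}$ in $u.q_{i_0}$ is exactly $\lambda_{i_0}\neq 0$, so $u.q_{i_0}\neq 0$ and $u\neq 0$.

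The one point that needs care — and the reason the statement does not follow directly from Proposition \ref{prop:embedding} — is that the $q_i$ here have \emph{varying} lengths, so the evaluation $u.q_{i_0}$ a priori produces several surviving terms $w'p_i$ that could conceivably cancel. The mechanism that rules this out is the minimality of $l(q_{i_0})$, which forces the associated $w'$ to be trivial and thereby isolates the single coefficient $\lambda_{i_0}$; the remaining equal-length collisions are then killed by the distinctness of the pairs. It is worth observing, in contrast with the proof of Proposition \ref{prop:embedding}, that $q_{i_0}$ cannot be extended on the left since no arrow starts at a sink — which is exactly why a direct evaluation on $q_{i_0}$, rather than on an extension, is the right move. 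I expect the verification of the action formula and this isolation step to be the only real content; everything else is bookkeeping.
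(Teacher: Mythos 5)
Your proposal is correct and is essentially the paper's own proof: the paper likewise evaluates $u$ in the representation $\mathcal{N}$ on a shortest $q_i$ (there indexed so that $q_1$ is shortest), notes that minimality forces the only surviving terms to be those with $q_i=q_1$ acting as $p_i$, and concludes nonvanishing from the distinctness of the resulting paths $p_i$. Your "read off the coefficient of $p_{i_0}$" phrasing and the paper's "the surviving $p_i$'s are pairwise distinct basis vectors" are the same mechanism, so there is nothing to add.
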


\begin{proof}
It suffices to show that each element $u=\sum_{i=1}^l \lambda_i \; p_i^*q_i$ in $L_k(Q)$  is nonzero, where each $\lambda_i \in k$ is nonzero and the pairs $(p_i, q_i)$ are pairwise distinct.  We require that each $(p_i)^*q_i$ is in the above subset. Assume that
$q_1$ is the shortest among the $q_i$'s (such $q_1$ need not be unique). Consider the element $q_1\in \mathcal{N}$. Then we have $$u.q_1=\lambda_1p_1+\sum \lambda_i p_i,$$ where the summation runs over
$2 \leq i\leq l$ with $q_i=q_1$. Observe that these $p_i$'s are different
from $p_1$. It follows that $u.q_1\neq 0$. This proves
that $u$ is nonzero.
\end{proof}

\subsection{Minimal left ideals} We show that some of the irreducible representations constructed
in Section 3 are isomorphic to minimal left ideals of the Leavitt path algebra. For this, we recall some terminologies
from \cite{AMMS08, AMMS}. Let $Q$ be a quiver. A vertex $i$ is called \emph{linear} provided that there is at most
one arrow starting at $i$ and there is no oriented cycles going through $i$.  A linear vertex $i$ is a \emph{line point}
if each vertex $j$, to which there is a (unique) path starts from $i$, is linear.

There are two cases for a line point. A line point $i$ is \emph{infinite} provided that there is a left-infinite path $p$ starting at $i$; this unique path is called the \emph{tail} of the infinite line point. A line point $i$ is \emph{finite}
provided that there is a sink $i_0$ to which there is a path starting from $i$; this unique sink is called the \emph{end}
of the finite line point. We remark that a sink is a finite line point, whose end is itself.

For a vertex $i$ of $Q$, we consider the left ideal $L_k(Q)e_i$ generated by the idempotent $e_i$. This left ideal is viewed as a representation of $L_k(Q)$; it is clearly nonzero by Proposition \ref{prop:embedding}.

\begin{prop}\label{prop:minimal} Let $Q$ be a quiver. Then the following statements hold.
\begin{enumerate}
\item Let $i$ be an infinite line point with tail $p$. Then there is an isomorphism of representations
$$L_k(Q)e_i\simeq \mathcal{F}_{[p]}.$$
\item Let $i$ be a finite line point with end $i_0$. Then there is an isomorphism of representations
$$L_k(Q)e_i\simeq \mathcal{N}_{i_0}.$$
\end{enumerate}
\end{prop}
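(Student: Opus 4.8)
The plan is to construct explicit maps between the left ideal $L_k(Q)e_i$ and the target irreducible representation, using the structure imposed by the line-point hypothesis. The key structural fact about a line point $i$ is that it has a \emph{unique} outgoing path of each admissible length: since $i$ is linear and every vertex reachable from $i$ is linear, the paths starting at $i$ form a single chain (no branching, no cycles). For an infinite line point, this chain is exactly the tail $p = \cdots \alpha_2\alpha_1$, so the paths starting at $i$ are precisely the truncations $\tau_{\leq n}(p)$ for $n \geq 0$; for a finite line point, the chain terminates at the sink $i_0$, so the paths starting at $i$ are the finitely many initial segments of the unique path from $i$ to $i_0$.

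First I would analyze $L_k(Q)e_i$ using Corollary \ref{cor:1} and the local-unit/normal-form machinery. Any element of $L_k(Q)e_i$ is a combination of terms $p'^*q'$ with $q'$ ending where... more precisely, applying relation (1)--(2), $p'^*q' e_i = \delta_{i,s(q')}\, p'^*q'$, so $q'$ must start at $i$. Because $i$ is a line point, $q'$ is then forced to be one of the unique outgoing paths described above. I would then use the Cuntz-Krieger relation (3) together with Lemma \ref{lem:multi} and the fact that at each linear vertex there is at most one arrow to collapse products: the relation (4) at a linear vertex $j$ reads $\alpha^*\alpha = e_j$ for the single arrow $\alpha$ starting at $j$, which lets me rewrite $p'^*q'$ and show that $L_k(Q)e_i$ is spanned by elements of the form $p'^*$ (equivalently $p'^* e_i$) as $p'$ ranges over paths starting at $i$. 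This is the computational heart: I expect $L_k(Q)e_i$ to have a basis $\{ (\tau_{\leq n}(p))^* \mid n\geq 0\}$ in the infinite case, and $\{ w^* \mid w \text{ an initial segment of the path } i\to i_0\}$ in the finite case.

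Next I would define the isomorphism. In case (1), send the basis element $(\tau_{\leq n}(p))^* = (\tau_{\leq n}(p))^* e_i$ to the left-infinite path $\tau_{>n}(p) \in \mathcal{F}_{[p]}$; note each $\tau_{>n}(p)$ lies in $[p]$ and, because $p$ is the tail of an infinite line point, these exhaust a tail-equivalence representative set cleanly. I would verify this is $L_k(Q)$-linear by checking it against generators $e_j, \alpha, \alpha^*$ using the explicit actions $P_j, S_\alpha, S_\alpha^*$ from Proposition \ref{prop:1} and comparing with left multiplication in the ideal via Lemma \ref{lem:multi}. In case (2), I would map $e_i \mapsto $ (the unique path $w_0$ from $i$ to $i_0$) $\in \mathcal{N}_{i_0}$, and more generally $w^* e_i \mapsto$ the appropriate tail of $w_0$; since $\mathcal{N}_{i_0}$ is generated by $e_{i_0}$ and is irreducible (Theorem \ref{thm:N}), surjectivity is automatic and I need only exhibit a nonzero homomorphism, whose injectivity follows from irreducibility of the target together with nonvanishing of $L_k(Q)e_i$ (Proposition \ref{prop:embedding}).

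The main obstacle I anticipate is the spanning/basis computation for $L_k(Q)e_i$ — showing that the line-point hypothesis really forces every element of the ideal into the restricted form $\sum \lambda_n (\tau_{\leq n}(p))^*$ (or its finite analogue), with no surviving $q'$-components and no linear independence failures. This requires careful bookkeeping with the normal form \eqref{equ:1}, repeated use of relation (3) to cancel $\alpha\beta^* = \delta_{\alpha\beta}e_{t(\alpha)}$ along the unique chain, and the absence of oriented cycles through $i$ to guarantee the lengths are controlled and the resulting elements are genuinely distinct. Once the ideal is pinned down as a representation with the explicit basis above, matching it to $\mathcal{F}_{[p]}$ or $\mathcal{N}_{i_0}$ is a direct generator-by-generator check; alternatively, since both source and target are irreducible (the source being a minimal left ideal in the line-point case, which I would note follows from the explicit description), it suffices to produce a single nonzero intertwiner and invoke Schur-type rigidity from Theorems \ref{thm:1} and \ref{thm:N}.
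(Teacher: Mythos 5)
There is a genuine gap, and it sits exactly where you predicted: in the ``computational heart''. Your claimed description of $L_k(Q)e_i$ is wrong on two counts. First, for $n\geq 1$ the element $(\tau_{\leq n}(p))^*$ does not lie in the left ideal $L_k(Q)e_i$ at all: by relation (2), $(\tau_{\leq n}(p))^* = (\tau_{\leq n}(p))^*\, e_{t(\tau_{\leq n}(p))}$, and $t(\tau_{\leq n}(p))\neq i$ because no oriented cycle passes through $i$; hence $(\tau_{\leq n}(p))^*e_i=0$. You correctly observed that an element $p'^*q'$ of $L_k(Q)e_i$ must have $q'$ \emph{starting} at $i$ (so $q'=\tau_{\leq m}(p)$), but then you transferred the constraint to the $p'$-side; it is the starred factor $p'$ that remains \emph{unconstrained}, ranging over all finite paths with $t(p')=t(\tau_{\leq m}(p))$. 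Second, the line-point hypothesis only restricts arrows \emph{leaving} vertices reachable from $i$; it does not forbid arrows \emph{entering} the tail from elsewhere in the quiver. Consequently $L_k(Q)e_i$ is spanned by all elements $w^*\tau_{\leq m}(p)$ with $w$ ending on the tail, a set matching the basis $[p]$ of $\mathcal{F}_{[p]}$, which is typically far larger than the countable set of truncations (it can be uncountable). The finite case makes the failure stark: if $i=i_0$ is a sink with two incoming arrows, then $i$ is a finite line point with end itself, your claimed basis of $L_k(Q)e_i$ is the single element $e_{i_0}$, yet $\mathcal{N}_{i_0}$ contains $e_{i_0}$ and both arrows; so no map defined on your claimed basis can be the asserted isomorphism.

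Your fallback Schur argument is also misdirected and cannot repair this. A nonzero homomorphism is injective when the \emph{source} is irreducible, not the target; your intertwiners go from $L_k(Q)e_i$ to the irreducible representation, so injectivity would require knowing beforehand that $L_k(Q)e_i$ is a minimal left ideal --- which is essentially the content of the proposition, and your proposed justification of it rests on the erroneous basis. The paper's proof reverses the direction precisely to avoid this: in case (1) it defines $\mathcal{F}_{[p]}\rightarrow L_k(Q)e_i$ by sending $q\in [p]$ to $(\tau_{\leq n(q)}(q))^*\tau_{\leq m(q)}(p)$, where $n(q)$ is minimal with $\tau_{>n(q)}(q)=\tau_{>m(q)}(p)$; this is well defined because the tail of an infinite line point is not cyclic, and independent of the choice of $(n,m)$ by relation (4) at the linear vertices of the tail. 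Since $p\mapsto e_i$, injectivity follows from irreducibility of $\mathcal{F}_{[p]}$ (Theorem \ref{thm:1}) and surjectivity from the fact that $e_i$ generates the cyclic module $L_k(Q)e_i$. Case (2) is handled the same way, composing $\mathcal{N}_{i_0}\rightarrow L_k(Q)e_{i_0}$, $p\mapsto p^*$, with the isomorphism $L_k(Q)e_{i_0}\rightarrow L_k(Q)e_i$, $x\mapsto xq$, where $q$ is the unique path from $i$ to $i_0$ (using $qq^*=e_{i_0}$ and $q^*q=e_i$). If you turn your maps around in this fashion, Schur-type rigidity does all the work and no basis computation for the ideal is needed.
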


We consider a \emph{countable} quiver $Q$, that is, both the sets of vertices and arrows are countable.
By \cite[Theorem 4.13]{AMMS}, up to isomorphism, all minimal left ideals of $L_k(Q)$ are of the form
$L_k(Q)e_i$ for some line point $i$. Therefore, the irreducible representations constructed in Section 3 contain
all minimal left ideals of $L_k(Q)$. It seems that a similar result holds for an arbitrary quiver; see \cite[Proposition 1.9 and Theorem 1.10]{ARS11}.

\begin{proof} (1) For each left-infinite path $q$ in $[p]$, take $n(q)\geq 0$ smallest such that
$\tau_{>n(q)}(q)=\tau_{>m}(p)$ for some $m\geq 0$; such $m=m(q)$ is unique, since the tail of an infinite line point is not cyclic.
We observe that for each pair $(n, m)$ such that $\tau_{>n}(q)=\tau_{>m}(p)$, we have $(\tau_{\leq n}(q))^*\tau_{\leq m}(p)=(\tau_{\leq n(q)}(q))^*\tau_{\leq m(q)}(p)$ in $L_k(Q)$; here, we use the second Cuntz-Krieger relation and the fact that each vertex appearing in $p$ is linear.

We define a linear map $\mathcal{F}_{[p]} \rightarrow L_k(Q)e_i$, sending $q$ to $(\tau_{\leq n(q)}(q))^*\tau_{\leq m(q)}(p)$.
It is a homomorphism of representations by direct verification. Since the homomorphism sends $p$ to $e_i$, by the irreducibility
of $\mathcal{F}_{[p]}$ we deduce that it is an isomorphism.

(2) Let $q$ be the unique path from $i$ to its end $i_0$. Then we have an isomorphism $L_k(Q)e_{i_0}\rightarrow L_k(Q)e_i$ sending
$x$ to $xq$; compare \cite[Lemma 2.2]{AMMS08}. The inverse is given by the multiplication of $q^*$ from the right. Here, we apply the Cuntz-Krieger relations to have $qq^*=e_{i_0}$ and $q^*q=e_i$.

We define a linear map $\mathcal{N}_{i_0}\rightarrow L_k(Q)e_{i_0}$ sending $p$ to $p^*$. It sends $e_{i_0}$ to $e_{i_0}=e_{i_0}^*$. The map is a homomorphism of representations by direct verification. Then it follows from the irreducibility of $\mathcal{N}_{i_0}$ that the map is an isomorphism.
\end{proof}

\subsection{A faithfulness result}Recall that a quiver $Q$ is row-finite, provided that there is no infinite emitter in $Q$. A left-infinite path $p$
which is not cyclic is said to be  \emph{non-cyclic}.  This is equivalent to that $p\neq \tau_{>n}(p)$ for any $n\geq 1$.

We point out that a part of the argument in the following proof resembles the one given in the first step in the proof of
\cite[Theorem 4.2]{GR}.

\begin{prop}\label{prop:faithfulness}
Let $Q$ be a row-finite quiver.  Assume that for each vertex $i$ in $Q$, there exists either a path which starts at $i$ and terminates at a sink, or a non-cyclic left-infinite path which starts at $i$.  Then the representation $\mathcal{F}\oplus \mathcal{N}$ is faithful.
\end{prop}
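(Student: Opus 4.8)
The plan is to show that the kernel of the representation $\mathcal{F}\oplus \mathcal{N}$ is zero by taking an arbitrary nonzero element $u\in L_k(Q)$ and producing a basis vector (a left-infinite path in $\mathcal{F}$, or a finite path terminating at a sink in $\mathcal{N}$) that is not annihilated by $u$. Write $u$ in a normal form $u=\sum_{i=1}^l \lambda_i\, p_i^*q_i$ achieving the invariant $\kappa(u)$, so that $l(p_i)\leq \kappa(u)$ for all $i$. The key idea, following the pattern already used in Propositions \ref{prop:embedding} and \ref{prop:sink}, is to first reduce to a ``leading'' term and then feed $u$ an appropriate path of the form $\eta q_1$ (for a suitable tail $\eta$) so that by Lemma \ref{lem:multi} the action $u.(\eta q_1)$ collapses to a sum of \emph{pairwise distinct} basis vectors, hence is nonzero.

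First I would isolate the difficulty caused by the relation (4): the normal form is not unique, and different terms $p_i^*q_i$ may interact when acted on a path, because $p_i^*$ can absorb a prefix of the incoming path. To control this I would choose, among the $q_i$ with $l(q_i)$ minimal, a distinguished index (say $q_1$), and look at the terminating vertex $t(q_1)$. By hypothesis there is either a finite path $\eta$ from $t(q_1)$ to a sink, or a non-cyclic left-infinite path $\eta$ starting at $t(q_1)$. Acting by $u$ on the basis vector $\eta q_1$ (which lives in $\mathcal{N}$ in the first case and in $\mathcal{F}_{[\eta]}$ in the second), I compute $u.(\eta q_1)=\sum_i \lambda_i\, p_i^*.(q_i^*(\eta q_1))$; the term $q_i^*.(\eta q_1)$ is nonzero only when $q_i$ is a prefix of $\eta q_1$, and for those surviving $i$ the image is a basis vector obtained by stripping $q_i$ and prepending $p_i$. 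The goal is to argue that the resulting basis vectors are pairwise distinct, so no cancellation occurs and the $\lambda_1$-term in particular survives.

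The main obstacle will be exactly this no-cancellation argument: I must rule out collisions among the $p_i^*.(q_i^*(\eta q_1))$ coming from distinct pairs $(p_i,q_i)$. Two mechanisms threaten distinctness. First, terms with $q_i$ of different lengths strip off different amounts of $\eta q_1$; here the minimality of $l(q_1)$ and a careful choice of how far $\eta$ extends should separate the lengths of the resulting paths, preventing collisions across length classes. Second, among terms sharing the same $q_i=q_1$, the incoming path reduces to $p_i$ prepended to $\eta$, and distinctness of these follows because the pairs $(p_i,q_1)$ are distinct forces the $p_i$ distinct. The delicate case is when a \emph{longer} $q_i$ overlaps with $\eta$: the non-cyclicity of $\eta$ in the infinite case (and the fact that $\eta$ terminates at a sink, hence cannot be a proper initial segment of a longer path ending at the same sink, in the finite case) is precisely what prevents an unwanted coincidence. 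I would handle this by choosing $\eta$ long enough (in the finite-sink case, $\eta$ is already rigid; in the infinite case, non-cyclicity guarantees all truncations $\tau_{>n}(\eta)$ are distinct) so that the only overlaps forced are the genuine ones, and then verify that after stripping, the surviving basis vectors are determined by their $(p_i,q_i)$ data and are therefore pairwise distinct. Once distinctness is established, $u.(\eta q_1)\neq 0$, so $u$ acts nontrivially on $\mathcal{F}\oplus\mathcal{N}$, establishing faithfulness.
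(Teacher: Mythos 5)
Your reduction to a leading term and the subsequent no-cancellation claim is where the argument breaks. When the $p_i$ are nontrivial, the surviving outputs $w_i p_i$ (where $\eta q_1 = w_i q_i$) need \emph{not} be pairwise distinct, and neither minimality of $l(q_1)$ nor non-cyclicity of $\eta$ prevents collisions: non-cyclicity only rules out $\tau_{>n}(\eta)=\eta$, but it cannot stop an appended $p_i$ from rebuilding exactly the prefix of $\eta$ that $q_i$ stripped off. Concretely, let $Q=R_2$ be the quiver of Example \ref{exm:Leavitt} with loops $a,b$ (its unique vertex admits irrational, hence non-cyclic, left-infinite paths, so the hypotheses of the proposition hold). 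Take
$$u \; = \; ba - a^*(aba),$$
a valid normal form with distinct pairs $(e,ba)$ and $(a,aba)$, where $e$ is the trivial path at the unique vertex, and with $q_1=ba$ shortest. By relation (4) we have $u=(e-a^*a)ba=b^*(bba)$, which is nonzero by Proposition \ref{prop:embedding}. Now let $\eta$ be any non-cyclic left-infinite path whose first arrow is $a$, say $\eta=\eta' a$. Acting on $\eta q_1=\eta ba$: the first term strips $ba$ and gives $\eta$; the second term strips $aba$ (possible, since $\eta ba=\eta'\cdot aba$) and then appends $a$, giving $\eta' a=\eta$ again. Hence $u.(\eta ba)=\eta-\eta=0$. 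Both terms have the same $\mathbb{Z}$-degree $l(q_i)-l(p_i)=2$, and the outputs are infinite paths, so there is no ``length class'' available to separate them. Since the hypothesis of the proposition only guarantees the existence of \emph{some} suitable $\eta$ starting at each vertex, not one avoiding prescribed initial arrows, the construction cannot in general be rescued by choosing $\eta$ differently. (The same phenomenon occurs in your sink case: adjoin an arrow $c$ from the vertex to a sink and act on $caba$.)

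What is missing is a step that removes the stars \emph{before} any path is fed in, and this is exactly where the row-finiteness hypothesis---which your proposal never invokes---enters. The paper argues by induction on $\kappa(u)$: after replacing $u$ by some nonzero $e_ju$, if $\kappa(u)>0$ then $j=s(p_i)$ is not a sink, hence regular by row-finiteness, and relation (4) yields $u=\sum_{\{\alpha\,|\,s(\alpha)=j\}}\alpha^*(\alpha u)$; therefore some $v=\alpha u$ is nonzero, $\kappa(v)<\kappa(u)$ by relation (3), and $u$ acts nontrivially whenever $v$ does. This reduces everything to the case $\kappa(u)=0$, where all $p_i$ are trivial and your shortest-$q_1$ device does work: acting on $pq_1$, every surviving term other than the first produces $\tau_{>n_i}(p)$ with $n_i\geq 1$, so non-cyclicity of $p$ (or, in the sink case, a comparison of lengths after first passing to $pu$) guarantees that the coefficient of the basis vector $p$ is exactly $\lambda_1\neq 0$. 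Note that even in this base case the correct claim is only that the leading output cannot be matched by any other term---the remaining terms may well cancel among themselves, which is harmless---not that all outputs are pairwise distinct.
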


\begin{proof}
We will show that for each nonzero element  $u\in L_k(Q)$, its action on $\mathcal{F}\oplus \mathcal{N}$ is nonzero.
Write $u=\sum_{i=1}^l \lambda_i\;  p_i^*q_i$ in its normal form; see (\ref{equ:1}). Moreover, there exists $j\in Q_0$ such that $e_ju\neq 0$;  see Subsection 2.2. Observe that if the action of $e_j u$ on $\mathcal{F}\oplus \mathcal{N}$  is
nonzero, so does $u$. So we replace $u$ by $e_j u$. This amounts to the requirement that in the normal form of $u$,
$s(p_i)=j$ for all $i$.

We use induction on the numerical invariant $\kappa(u)$ introduced in Subsection 2.2. For the case $\kappa(u)=0$, we have that $u=\sum_{i=1}^l \lambda_i q_i$ and $t(q_i)=j$. Without loss of generality, we assume that $q_1$ is shortest among all the $q_i$'s.
Consider the vertex $j$.  Then we are in two cases. In the first case, there is a path $p$ with $s(p)=j$ and $t(p)$ a sink.   Then $(pu). (pq_1)=\lambda_1 e_{j}\neq 0$. Here, we view  $pq_1\in \mathcal{N}$. This shows that $pu$ acts nontrivially  on $\mathcal{N}$, and so does $u$.

In the second case,  there is a non-cyclic  left-infinite path $p$ with $s(p)=j$.  We view $pq_1\in \mathcal{F}$.  Then we have $u.(pq_1)=\sum_{i=1}^l \lambda_i\;  q_i.(pq_1)$. Observe that for $i\neq 1$, $q_i.(pq_1)\neq 0$ if and only if $q_i=\tau_{\leq n_i} (p)q_1$ with
$n_i=l(q_i)-l(q_1)$, in which case we have that $q_i.(pq_1)=\tau_{>n_i}(p)$ and $n_i\geq 1$. Consequently, we have $$u.(pq_1)=\lambda_1 p+\sum \lambda_i \; \tau_{>n_i}(p),$$
 where the summation runs over all $i\neq 1$ such that $q_i=\tau_{\leq n_i} (p)q_1$. Since the left-infinite path $p$ is non-cyclic, in particular, $p\neq \tau_{> m}(p)$ for any $m\geq 1$, we have $u.(pq_1)\neq 0$.  This implies that $u$ acts nontrivially on $\mathcal{F}$.

 For the general case, we assume that $\kappa(u)>0$. This implies that $j=s(p_i)$ is not a sink. By assumption,
 the vertex $j$ is not an infinite emitter, and then the vertex $j$ is regular. By the relation (4), we have  $u=e_ju=\sum_{\{\alpha\in Q_1\; |\; s(\alpha)=j\}}\alpha^* \alpha u$. In particular, there is
 an arrow $\alpha$ with $v=\alpha u\neq 0$. Observe by the relation (3) that $\kappa(v)<\kappa(u)$. Hence by
 the induction hypothesis, the action of $v$ on $\mathcal{F}\oplus \mathcal{N}$ is nonzero. This forces
 that the action of  $u$ is also nonzero.
\end{proof}

\begin{rem}\label{rem:faithful}
The conditions on the quiver are somehow necessary for the proposition. Consider the quiver $Q=R_1$ in Example \ref{exm:Leavitt}, that is, it consists of a vertex with one loop attached. The representation $\mathcal{F}$ is one dimensional, and $\mathcal{N}$ is
clearly zero. Then the representation $\mathcal{F}\oplus \mathcal{N}$ is not faithful.
\end{rem}

We illustrate Proposition \ref{prop:faithfulness} with an example.

\begin{exm}\label{exm:faithful}
{\rm
Let $Q$ be a finite quiver without oriented cycles. Then the Leavitt path algebra $L_k(Q)$ is finite dimensional
by Corollary \ref{cor:1}.  In this case, the representation $\mathcal{F}$ is zero. Then the finite dimensional representation
$\mathcal{N}$ is  faithful and  completely reducible. It follows that the Leavitt path algebra $L_k(Q)$ is semi-simple and $\{\mathcal{N}_i |\; i\in Q_0 \mbox{ is a sink}\}$ is a complete set of pairwise non-isomorphic irreducible representations of $L_k(Q)$, each of which has its endomorphism algebra isomorphic to $k$.

We apply the Wedderburn-Artin theorem for semisimple algebras to infer that $L_k(Q)$ is isomorphic to a product of full matrix algebras over $k$; see \cite[Proposition 3.5]{AAS}. We point out that this result can be proved directly by  combining Lemma \ref{lem:multi} and  Proposition \ref{prop:sink}.
}\end{exm}


\section{Algebraic branching systems}

In this section, we relate the irreducible representations constructed in Section 3  to certain
algebraic branching systems in \cite{GR}. This somehow is expected by the authors; see  the second paragraph
in \cite[p.259]{GR}. For a row-finite quiver, we classify algebraic branching systems, whose associated representations of the Leavitt path algebra are irreducible. It turns out that all these irreducible representations are isomorphic to the ones in Section 3.

Let $Q$ be an arbitrary quiver. Following \cite[Definition 2.1]{GR},  a $Q$-\emph{algebraic branching system}
consists of a set $X$, and a family of its subsets $\{X_i, X_\alpha\; |\; i\in Q_0, \alpha\in Q_1\}$ together with
a bijection $\sigma_\alpha\colon X_{t(\alpha)}\rightarrow X_\alpha$  for each arrow $\alpha$, where the subsets are
subject to the following  constraints:
\begin{enumerate}
\item[(1)] $X_i\cap X_j=\emptyset= X_\alpha\cap X_\beta$ for $i\neq j$, $\alpha\neq \beta$;
\item[(2)] $X_\alpha\subseteq X_{s(\alpha)}$ for each $\alpha\in Q_1$;
\item[(3)] $X_i=\bigcup_{\{\alpha\in Q_1 \; |\; s(\alpha)=i\}} X_\alpha$ for each regular vertex $i$.
\end{enumerate}
We will denote the above $Q$-algebraic branching system simply by $X$. We point out that this notion  is closely related to dynamic systems with partitions studied in \cite{Bon}.

A $Q$-algebraic branching system $X$ is \emph{saturated} provided that $X=\cup_{i \in Q_0}X_i$; it is said to be \emph{perfect}, if in addition that the equation in (3) holds for each non-sink vertex of $Q$. For a row-finite quiver $Q$, any saturated $Q$-algebraic branching system is perfect.

Let $X$ and $Y$ be two $Q$-algebraic branching systems. A map $f\colon X\rightarrow Y$ is a \emph{morphism} of $Q$-algebraic branching systems, if $f(X_i)\subseteq Y_i$ and $f(X_\alpha)\subseteq Y_\alpha$ for all vertices $i$ and arrows $\alpha$ of $Q$, and  $f$ is compatible with the bijections inside $X$ and $Y$. Two  $Q$-algebraic branching systems are \emph{isomorphic} provided that there exist mutually inverse morphisms between them.

Examples of $Q$-algebraic branching systems are given in \cite[Theorem 3.1]{GR}. We are interested in
the following examples, both of which are perfect.

\begin{exm}\label{exm}
{\rm (1) Let $p$ be a left-infinite path in $Q$. Consider its tail-equivalence class $[p]$ as a set. It is
a $Q$-algebraic branching system in the following manner: $[p]_{i}=\{q\in [p]\: |\; s(q)=i\}$, $[p]_\alpha=\{q\in [p]\; |\;
q \mbox{ starts with } \alpha \}$. The bijection $\sigma_\alpha \colon [p]_{t(\alpha)} \rightarrow [p]_\alpha$ sends $q$ to $q\alpha$.

(2) Let $i\in Q_0^s$ be a sink. Consider the set $N_i$ consisting of paths in $Q$ that terminate at $i$. It is
a $Q$-algebraic branching system in a similar manner as above.
}
\end{exm}

We recall that one may associate a representation of the Leavitt path algebra   to each $Q$-algebraic branching
system.  Let $X$ be a $Q$-algebraic branching system. Denote by $\mathcal{M}(X)$ the vector space consisting
of all functions from $X$ to $k$, which vanish on all, but  finitely many, elements in $X$. For each $x\in X$, denote
by $\chi_x \colon X\rightarrow k$ the \emph{characteristic  function}. That is, $\chi_x(y)=\delta_{x, y}$ for
 all $x$ and $y$ in $X$. Then $\{\chi_x\; |\; x\in X\}$ is a basis of
$\mathcal{M}(X)$.

The following construction is contained in \cite[Theorem 2.2 and Remark 2.3]{GR}. We adapt the notation
for our convenience.

\begin{lem}
Let $X$ be a $Q$-algebraic branching system. Then there is a representation  of $L_k(Q)$ on $\mathcal{M}(X)$ as follows:
 \begin{enumerate}
 \item[(1)] for each $i\in Q_0$, $e_i.\chi_x=\chi_x$ if $x\in X_i$, otherwise $e_i. \chi_x=0$;
 \item[(2)]   for each $\alpha\in Q_1$, $\alpha.\chi_x=\chi_{\sigma_\alpha^{-1}(x)}$ if $x\in X_\alpha$, otherwise $\alpha. \chi_x=0$;
  \item[(3)] for each $\alpha\in Q_1$, $\alpha^*. \chi_x=\chi_{\sigma_\alpha(x)}$ if $x\in X_{t(\alpha)}$, otherwise  $\alpha^*. \chi_x=0$. \hfill $\square$
      \end{enumerate}
\end{lem}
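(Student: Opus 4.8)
The plan is to define, for each generator of $L_k(Q)$, a linear endomorphism of $\mathcal{M}(X)$ by the three prescribed formulas on the basis $\{\chi_x\; |\; x\in X\}$, and then to check that these endomorphisms satisfy the five defining relations (0)--(4) of the Leavitt path algebra. Once the relations hold, the desired algebra homomorphism $L_k(Q)\rightarrow {\rm End}_k(\mathcal{M}(X))$ exists by the presentation of $L_k(Q)$ by generators and relations, exactly as in the proof of Proposition \ref{prop:1}. The only new feature compared to that proof is that each relation must be traced back to one of the three constraints defining a $Q$-algebraic branching system, so the verification is really an exercise in bookkeeping those constraints against the evaluations of the operators on a basis vector $\chi_x$.

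First I would treat relation (0). For $i\neq j$ the disjointness $X_i\cap X_j=\emptyset$ from the first constraint forces that no $\chi_x$ survives both $e_i$ and $e_j$, so $e_i.(e_j.\chi_x)=\delta_{ij}\, e_i.\chi_x$. Next I would verify relations (1) and (2). The key point here is that $\sigma_\alpha$ is a bijection $X_{t(\alpha)}\rightarrow X_\alpha$ together with the inclusion $X_\alpha\subseteq X_{s(\alpha)}$ from the second constraint: applying $\alpha$ produces a vector supported in $X_{t(\alpha)}$, while applying $\alpha^*$ produces one supported in $X_\alpha\subseteq X_{s(\alpha)}$. Hence pre- or post-composing with the appropriate $e_i$ acts as the identity precisely where the expression is nonzero, which is what relations (1) and (2) demand. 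For relation (3) I would compute $\alpha.(\beta^*.\chi_x)$: since $\beta^*.\chi_x=\chi_{\sigma_\beta(x)}$ is supported in $X_\beta$, the disjointness $X_\alpha\cap X_\beta=\emptyset$ for $\alpha\neq\beta$ (again the first constraint) kills the result unless $\alpha=\beta$, in which case $\sigma_\alpha^{-1}(\sigma_\alpha(x))=x$ returns $\chi_x$ exactly when $x\in X_{t(\alpha)}$; this is precisely $\delta_{\alpha,\beta}\, e_{t(\alpha)}.\chi_x$.

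The main obstacle, and the one place where regularity of the vertex is genuinely used, is relation (4). For a regular vertex $i$ the sum $\sum_{\{\alpha\in Q_1\;|\; s(\alpha)=i\}}\alpha^*\alpha$ is finite, since $i$ is not an infinite emitter; this is exactly what lets the corresponding sum of operators make sense on $\mathcal{M}(X)$. A short evaluation gives $\alpha^*.(\alpha.\chi_x)=\chi_x$ when $x\in X_\alpha$ and $0$ otherwise, so the full sum evaluated at $\chi_x$ returns $\chi_x$ according to whether $x$ lies in some $X_\alpha$ with $s(\alpha)=i$. The first and third constraints together say that these $X_\alpha$ form a disjoint cover of $X_i$, so the sum equals $\chi_x$ exactly when $x\in X_i$, which matches $e_i.\chi_x$. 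Assembling these five verifications yields the claimed representation on $\mathcal{M}(X)$.
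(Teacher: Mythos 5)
Your verification is correct and complete: each Leavitt relation is matched to the right constraint of the branching system — disjointness of the $X_i$ (resp.\ $X_\alpha$) for relations (0) and (3), the inclusion $X_\alpha\subseteq X_{s(\alpha)}$ and the bijectivity of $\sigma_\alpha$ for (1) and (2), and regularity of $i$ (finiteness of the sum plus the disjoint covering $X_i=\bigcup_{s(\alpha)=i}X_\alpha$) for (4). The paper itself gives no proof, deferring to \cite[Theorem 2.2 and Remark 2.3]{GR}, and your argument is exactly the direct verification used there and modeled on the proof of Proposition \ref{prop:1}, so there is nothing to add.
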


For a $Q$-algebraic branching system $X$, the above representation $\mathcal{M}(X)$ of $L_k(Q)$ is said to be the \emph{associated representation}
to $X$. Observe that $X$ is saturated if and only if the associated representation $\mathcal{M}(X)$ is unital, that is, $L_k(Q).\mathcal{M}(X)=\mathcal{M}(X)$.

Let $f\colon X\rightarrow Y$ a morphism of $Q$-algebraic branching systems. Assume that $X$ is perfect. Then $f$ induces a homomorphism of  associated representations
$$\mathcal{M}(f)\colon \mathcal{M}(X)\longrightarrow \mathcal{M}(Y),$$
which sends $\chi_x$ to $\chi_{f(x)}$.  Here, we use the fact that $f^{-1}(Y_i)=X_i$ and $f^{-1}(Y_\alpha)=X_\alpha$ for each vertex $i$ and arrow $\alpha$ of $Q$, which is derived directly from the perfectness of $X$. The homomorphism $\mathcal{M}(f)$ is an isomorphism if and only if so is $f$.

The following observation shows that the representations constructed in Section 3 are associated
to the $Q$-algebraic branching systems in Example \ref{exm}.

\begin{prop}\label{prop:FN} Let $Q$ be a quiver. Use the notation as above. Then there are isomorphisms of
representations $$\mathcal{F}_{[p]}\simeq \mathcal{M}([p]) \mbox{ and } \mathcal{N}_i\simeq \mathcal{M}(N_i)$$ for each left-infinite
path $p$ and sink $i$.
\end{prop}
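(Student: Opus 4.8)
The plan is to exhibit explicit linear maps in each case and check they intertwine the two actions of $L_k(Q)$. For the first isomorphism $\mathcal{F}_{[p]}\simeq \mathcal{M}([p])$, recall that $\mathcal{F}_{[p]}$ has basis the left-infinite paths $q\in[p]$, while $\mathcal{M}([p])$ has basis the characteristic functions $\chi_q$ indexed by the very same set $[p]$. So I would define $\Phi\colon \mathcal{F}_{[p]}\rightarrow \mathcal{M}([p])$ on basis elements by $\Phi(q)=\chi_q$ and extend linearly; this is tautologically a linear isomorphism of the underlying vector spaces. The entire content is to verify that $\Phi$ commutes with the three families of generators $e_i$, $\alpha$, $\alpha^*$.

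The verification amounts to matching the action formulas from Proposition~\ref{prop:1} against the branching-system formulas in the Lemma, reading off the branching-system data of Example~\ref{exm}(1). Concretely, for the vertex idempotents, $e_i$ acts on $q$ in $\mathcal{F}_{[p]}$ by $\delta_{i,s(q)}q$, while on $\chi_q$ it returns $\chi_q$ exactly when $q\in[p]_i=\{q\mid s(q)=i\}$; these agree. For an arrow $\alpha$, the action $S_\alpha(q)=\delta_{\alpha,\alpha_1}\tau_{>1}(q)$ must be compared with $\alpha.\chi_q=\chi_{\sigma_\alpha^{-1}(q)}$ when $q\in[p]_\alpha$. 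The point is that $q\in[p]_\alpha$ precisely means $q$ starts with $\alpha$, i.e. $\alpha_1=\alpha$, and then $\sigma_\alpha^{-1}(q)=\tau_{>1}(q)$ since $\sigma_\alpha$ sends $q'\mapsto q'\alpha$; so both give $\chi_{\tau_{>1}(q)}$. Dually, $S^*_\alpha(q)=\delta_{t(\alpha),s(q)}\,q\alpha$ matches $\alpha^*.\chi_q=\chi_{\sigma_\alpha(q)}=\chi_{q\alpha}$, valid exactly when $q\in[p]_{t(\alpha)}$, i.e. $s(q)=t(\alpha)$. Thus $\Phi$ is a homomorphism of representations, and being bijective on bases, an isomorphism.

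For the second isomorphism $\mathcal{N}_i\simeq \mathcal{M}(N_i)$ I would argue identically: both spaces have basis indexed by the set $N_i$ of finite paths terminating at the sink $i$, so $\Psi(q)=\chi_q$ is a linear isomorphism, and I would check the same three compatibilities using the branching-system structure of Example~\ref{exm}(2). The only extra care is the degenerate boundary behaviour: $S_\alpha$ kills the trivial path $e_i$ (the case $l(p)=0$), which matches the branching-system rule since the trivial path does not lie in any $(N_i)_\alpha$, so $\alpha.\chi_{e_i}=0$ there as well. Everything else is the same bookkeeping as in the first case.

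I do not expect a genuine obstacle here; the statement is essentially a translation between two equivalent descriptions of the same representation, and the work is purely a matter of unwinding definitions. The one place demanding attention is getting the direction of $\sigma_\alpha$ (and hence which of $\alpha$, $\alpha^*$ uses $\sigma_\alpha$ versus $\sigma_\alpha^{-1}$) consistent with the conventions of Proposition~\ref{prop:1}, since a sign-like slip there would make the maps fail to intertwine; once the indexing of $\sigma_\alpha$ against $S_\alpha$ and $S^*_\alpha$ is pinned down, the proof closes immediately.
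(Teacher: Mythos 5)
Your proposal is correct and is exactly the paper's argument: the paper also defines the map $q\mapsto\chi_q$ and asserts that it is an isomorphism of representations ``by direct verification,'' which is precisely the generator-by-generator check you carry out. Your explicit matching of $\sigma_\alpha$, $\sigma_\alpha^{-1}$ with $S^*_\alpha$, $S_\alpha$ (including the boundary case of the trivial path at a sink) is the verification the paper leaves to the reader.
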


\begin{proof}
The linear map $\mathcal{F}_{[p]}\rightarrow \mathcal{M}([p])$ sending $q$ to $\chi_q$ is an isomorphism of
representations.  This is done by direct verification. The same map works for $\mathcal{N}_i$.
\end{proof}

We infer from Section 3 and Proposition \ref{prop:FN} that the representations associated to algebraic branching systems
in Example \ref{exm} are irreducible.  In some cases, these are all the irreducible representations constructed in this way.

\begin{thm}\label{thm:irre}
Let $Q$ be a quiver and $X$ a perfect $Q$-algebraic branching system. Then the associated representation $\mathcal{M}(X)$ is irreducible if and only if $X$ is isomorphic to $[p]$ or $N_i$, where $p$ is a left-infinite path and $i$ is a sink in $Q$.
\end{thm}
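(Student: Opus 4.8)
The statement is an equivalence, and one direction is essentially already in hand: if $X$ is isomorphic to $[p]$ or to $N_i$, then the induced isomorphism of associated representations (the criterion that $\mathcal{M}(f)$ is an isomorphism whenever $f$ is) together with Proposition \ref{prop:FN} identifies $\mathcal{M}(X)$ with $\mathcal{F}_{[p]}$ or $\mathcal{N}_i$, which are irreducible by Theorems \ref{thm:1} and \ref{thm:N}. So I would spend all the effort on the converse. The organizing tool I would introduce is a deterministic \emph{forward map}. Since the $X_j$ partition $X$ (constraint (1) plus saturation), every $x$ has a well-defined vertex $s(x)$ with $x\in X_{s(x)}$; and if $s(x)$ is not a sink, perfectness writes $X_{s(x)}=\bigsqcup_{s(\alpha)=s(x)}X_\alpha$, so $x$ lies in a unique $X_\alpha$, and I set $F(x)=\sigma_\alpha^{-1}(x)\in X_{t(\alpha)}$. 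Here it is exactly perfectness, rather than mere saturation, that makes $F$ total off the sinks even at infinite emitters. The action formulas $\alpha.\chi_x=\chi_{F(x)}$ (when defined) and $\alpha^*.\chi_x=\chi_{\sigma_\alpha(x)}$ exhibit $F$ and the $\sigma_\alpha$ as mutually inverse branches.

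From this I would derive, using Corollary \ref{cor:1}, that the $L_k(Q)$-submodule generated by $\chi_{x_0}$ is spanned by the $\chi_y$ with $y$ in the orbit of $x_0$ under the equivalence generated by $x\leftrightarrow F(x)$, and that this equivalence is precisely \emph{eventual coincidence}: $x\approx x'$ if and only if $F^a(x)=F^b(x')$ for some $a,b\geq 0$ (straightening a zigzag of $F$ and its branches, which is possible because $F$ is single-valued). Assuming now that $\mathcal{M}(X)$ is irreducible, $X$ is a single such orbit. Fixing $x_0$, this yields a global dichotomy: either (A) the forward orbit $x_0,F(x_0),F^2(x_0),\dots$ is infinite, in which case every $y$ has infinite forward orbit eventually merging into that of $x_0$; or (B) it terminates at some $x_n$ with $s(x_n)=:i$ a sink, in which case every $y$ reaches $x_n$.

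In case (A) I read off the arrows traced along the forward orbit to define $\pi\colon X\to Q_\infty$, $y\mapsto$ the left-infinite path it traces; the merging property gives $\pi(y)\sim\pi(x_0)=:p$, so $\pi$ lands in $[p]$. In case (B), reading off arrows until $x_n$ gives $\pi\colon X\to N_i$. In both cases one checks directly that $\pi$ is a morphism of $Q$-algebraic branching systems, i.e.\ $\pi(X_j)\subseteq(\cdot)_j$, $\pi(X_\alpha)\subseteq(\cdot)_\alpha$, and $\pi\circ\sigma_\alpha=\sigma_\alpha\circ\pi$, the last point because prepending $\alpha$ to $x$ prepends $\alpha$ to the traced path. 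Since $X$ is perfect, $\pi$ induces a homomorphism $\mathcal{M}(\pi)\colon\mathcal{M}(X)\to\mathcal{M}([p])$ (resp.\ $\mathcal{M}(N_i)$), nonzero because $\mathcal{M}(\pi)(\chi_{x_0})=\chi_p\neq 0$. Irreducibility of $\mathcal{M}(X)$ forces $\ker\mathcal{M}(\pi)=0$, and irreducibility of the target forces the image to be everything, so $\mathcal{M}(\pi)$ is an isomorphism; by the criterion that $\mathcal{M}(f)$ is an isomorphism if and only if $f$ is, $\pi$ is an isomorphism of branching systems, whence $X\cong[p]$ (resp.\ $N_i$).

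The step I expect to be the main obstacle is turning irreducibility into the global dichotomy and the well-definedness of $\pi$ — that is, showing that orbit-equivalence collapses to eventual coincidence under $F$ and that all forward orbits share one fate (all infinite and mutually tail-equivalent, or all ending at a single sink element). This rests squarely on $F$ being a genuine partial function, which needs perfectness even at infinite emitters. I would emphasize that the periodic case (cyclic $p$) is handled uniformly here: a bigger, "covering" orbit of $[p]$ would still be a single orbit, but irreducibility — through $\ker\mathcal{M}(\pi)=0$, i.e.\ injectivity of $\pi$ — is exactly what excludes it, so no separate treatment of rational versus irrational classes is required.
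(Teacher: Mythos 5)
Your proposal is correct, and its skeleton coincides with the paper's: your deterministic forward map $F$ and path-tracing map $\pi$ are exactly the construction the paper sets up before Lemma \ref{lem:morphism} (there called $f_X$, with the same observation that perfectness, not mere saturation, is what makes the tracing well defined at infinite emitters), and your closing step --- $\mathcal{M}(\pi)$ nonzero, injective by irreducibility of the source, hence an isomorphism, hence $\pi$ an isomorphism of branching systems --- is the paper's closing argument. Where you genuinely diverge is in how the target of $\pi$ gets pinned down. The paper takes $f_X$ with values in the full system $Q_\infty\cup N$, notes $\mathcal{M}(Q_\infty\cup N)\simeq \mathcal{F}\oplus\mathcal{N}$, and invokes the structure theory of Section 3: the image of the irreducible $\mathcal{M}(X)$ under the injective map $\mathcal{M}(f_X)$ is an irreducible subrepresentation of the completely reducible $\mathcal{F}\oplus\mathcal{N}$, hence \emph{equals} some $\mathcal{F}_{[p]}$ or $\mathcal{N}_i$, whence the image of $f_X$ equals $[p]$ or $N_i$. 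You instead prove an orbit lemma (via Corollary \ref{cor:1}, the submodule generated by $\chi_{x_0}$ is the span of the eventual-coincidence class of $x_0$), so that irreducibility makes $X$ a single orbit; this yields your dichotomy and lets you define $\pi$ landing directly in $[p]$ or $N_i$, with surjectivity coming from irreducibility of the target rather than from complete reducibility. What each buys: the paper's route is shorter because Section 3 already supplies the needed facts about $\mathcal{F}\oplus\mathcal{N}$ (in particular that its irreducible subrepresentations equal, not merely are isomorphic to, the summands); yours is more self-contained at that step and makes the dynamical picture explicit, and your final remark correctly isolates the role of injectivity of $\mathcal{M}(\pi)$ in excluding ``covering orbits'' over cyclic classes --- the case where single-orbit-ness alone would not suffice, as the example of $L_k(R_1)$ acting on $k[x,x^{-1}]$ shows.
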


This result implies that for a row-finite quiver $Q$, all the irreducible representations associated to some saturated $Q$-algebraic branching systems are isomorphic to the ones in Section 3.

The following example shows that the perfectness condition in the above theorem is somehow necessary.

\begin{exm}
{\rm Let $Q$ be the following quiver consisting of two vertices $\{1, 2\}$ and infinitely many arrows from
$1$ to $2$.
$$1 \stackrel{\infty}\longrightarrow 2$$
Consider the $Q$-algebraic branching system $X=\{*\}$ consisting of a single element, such that $X_1=X$, $X_2=\emptyset=X_\alpha$ for each arrow $\alpha$. Then X is saturated but not perfect; thus it is isomorphic to none of the $Q$-algebraic branching systems in Example \ref{exm}. However, the associated representation $\mathcal{M}(X)$ is one-dimensional and then irreducible. We refer to \cite[Lemma 1.2]{AA08} for the structure of the Leavitt path algebra $L_k(Q)$.
}\end{exm}

We make some preparation for the proof of Theorem \ref{thm:irre}. The argument here resembles the one in
the proof of  \cite[Theorem 1]{Bon}. Let $X$ be a perfect $Q$-algebraic branching system, and let $x\in X$. If $x\in X_i$ for a non-sink $i$, then there exists a unique arrow $\alpha$ such that $s(\alpha)=i$ and $x\in X_{\alpha}$; thus there exists a unique $y\in X_{t(\alpha)}$ such that $\sigma_\alpha(y)=x$. We repeat this argument to $y$.
Then we infer that  for each element $x\in X$ there are two cases as follows.

In the first case, there exists a unique left-infinite path $p(x)=\cdots \alpha_n\cdots \alpha_2\alpha_1$, such that there exist $x_m\in X_{s(\alpha_{m+1})}$ for $m\geq 0$, satisfying  that $x=x_0$ and $\sigma_{\alpha_m}(x_m)=x_{m-1}$ for $m\geq 1$. Here, we notice that $X_{s(\alpha_m)}=X_{t(\alpha_{m-1})}$ for $m\geq 1$.

In the second case, there exists a unique path $p(x)=\alpha_l\cdots \alpha_2\alpha_1$ ending at a sink such that there exist $x_m\in X_{s(\alpha_{m+1})}$ for $0\leq m\leq l-1$, and $x_l\in X_{t(\alpha_l)}$, satisfying  that $x=x_0$ and $\sigma_{\alpha_m}(x_m)=x_{m-1}$ for $1\leq m\leq l$. The length $l$ of the path $p(x)$ might be zero; this happens if and only if
$x\in X_i$ for a sink $i$.

Recall that $Q_\infty=\cup_{[p]\in \widetilde{Q}_\infty} [p]$ is a disjoint union. Then it is naturally a $Q$-algebraic branching system as in Example \ref{exm}(1). Similarly, the disjoint union $N=\cup_{i \in Q_0^s} N_i$ is  a $Q$-algebraic branching system, and so is the disjoint union $Q_\infty\cup N$.

We have the following observation, whose proof is routine.

\begin{lem}\label{lem:morphism}
Let $X$ be a perfect $Q$-algebraic branching system. Then the map $$f_X\colon X\longrightarrow Q_\infty\cup N,\;  f_X(x)=p(x)$$ is a morphism of $Q$-algebraic branching systems. \hfill $\square$
\end{lem}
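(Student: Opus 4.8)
The plan is to verify directly that the map $f_X$ respects all the structure of a $Q$-algebraic branching system, using the case analysis for $p(x)$ established in the paragraphs preceding the lemma. Recall that a morphism $f\colon X\to Q_\infty\cup N$ must satisfy three things: $f(X_i)\subseteq (Q_\infty\cup N)_i$ for every vertex $i$; $f(X_\alpha)\subseteq (Q_\infty\cup N)_\alpha$ for every arrow $\alpha$; and compatibility with the bijections, that is, $f\circ\sigma_\alpha=\sigma_\alpha\circ f$ on $X_{t(\alpha)}$ (where the right-hand $\sigma_\alpha$ is the bijection inside the branching system $Q_\infty\cup N$). I would treat each of these in turn.

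First I would pin down $s(p(x))$. By the construction of $p(x)=\cdots\alpha_n\cdots\alpha_2\alpha_1$, the defining data include $x=x_0\in X_{s(\alpha_1)}$, so by disjointness of the $X_j$ (constraint (1) in the definition of a branching system) the unique vertex $i$ with $x\in X_i$ is exactly $i=s(\alpha_1)=s(p(x))$. Since $(Q_\infty\cup N)_i=\{q\mid s(q)=i\}$, this immediately gives $f_X(X_i)\subseteq (Q_\infty\cup N)_i$, handling the vertex condition. For the arrow condition, suppose $x\in X_\alpha$; then $x\in X_{s(\alpha)}$ by constraint (2), and since $x$ is not trapped in a sink-subset at this stage, the first step of the inductive construction selects precisely the unique arrow $\beta$ with $s(\beta)=s(\alpha)$ and $x\in X_\beta$. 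By uniqueness and disjointness of the $X_\beta$, this forces $\beta=\alpha$, so $\alpha_1=\alpha$ and $p(x)$ starts with $\alpha$; thus $f_X(x)=p(x)\in(Q_\infty\cup N)_\alpha$.

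The compatibility with bijections is the step requiring the most care, and it is where I expect the only real bookkeeping. Fix an arrow $\alpha$ and $y\in X_{t(\alpha)}$, and set $x=\sigma_\alpha(y)\in X_\alpha\subseteq X_{s(\alpha)}$. I must show $p(\sigma_\alpha(y))=p(y)\alpha$, i.e. that prepending $\alpha$ to the path (or infinite path) associated to $y$ yields the one associated to $x$. This follows from the uniqueness built into the construction of $p(x)$: by the previous paragraph the first arrow of $p(x)$ is $\alpha$ and the unique preimage $x_1\in X_{t(\alpha)}$ with $\sigma_\alpha(x_1)=x$ is exactly $y$; hence the remaining data for $p(x)$ beginning at $x_1=y$ coincide with the defining data for $p(y)$, giving $\tau_{>1}(p(x))=p(y)$, equivalently $p(x)=p(y)\alpha$. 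On the branching-system side, $\sigma_\alpha$ in $Q_\infty\cup N$ is precisely the map $q\mapsto q\alpha$ (Example \ref{exm}), so $\sigma_\alpha(f_X(y))=p(y)\alpha=p(x)=f_X(\sigma_\alpha(y))$, as required. One should note in passing that the two cases for $p(x)$ (infinite versus terminating at a sink) are handled uniformly here, since the argument only uses the first step of the recursion; the main obstacle, such as it is, is simply keeping the indexing of the $x_m$ and $\alpha_m$ straight so that the shift by one arrow matches the recursive definition.
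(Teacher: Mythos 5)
Your verification is correct and coincides with the argument the paper intends: the paper states Lemma \ref{lem:morphism} without proof (calling it routine), and your three checks --- the vertex condition via $x=x_0\in X_{s(\alpha_1)}$ and disjointness of the $X_j$, the arrow condition via disjointness of the $X_\beta$, and compatibility with the bijections $q\mapsto q\alpha$ via the uniqueness built into the recursive construction of $p(x)$ --- constitute exactly that routine verification. The only point left implicit is the degenerate case $x\in X_i$ with $i$ a sink, where $p(x)=e_i$ and the vertex condition holds trivially.
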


We are in a position to prove Theorem \ref{thm:irre}.
\vskip 5pt

\noindent { \textit{Proof of Theorem} \ref{thm:irre}:}  The ``if" part follows from Proposition \ref{prop:FN} and Section 3. For the ``only if" part, assume that
the associated representation $\mathcal{M}(X)$ is irreducible. The morphism  in Lemma \ref{lem:morphism} induce a nonzero homomorphism $\mathcal{M}(f_X)\colon \mathcal{M}(X)\rightarrow \mathcal{M}(Q_\infty\cup N)$; it is injective, since $\mathcal{M}(X)$ is irreducible. Observe from Proposition \ref{prop:FN} that $\mathcal{M}(Q_\infty \cup N)\simeq \mathcal{F}\oplus \mathcal{N}$.

Recall from Section 3 that the representation $\mathcal{F}\oplus \mathcal{N}$ is completely reducible and any of its irreducible sub representations equals  $\mathcal{F}_{[p]}$ or $\mathcal{N}_i$ for some left-infinite path $p$ or a sink $i$. From these we infer that the image of the injective homomorphism $\mathcal{M}(f_X)$ equals $\mathcal{F}_{[p]}$ or $\mathcal{N}_i$. This implies that the image of $f_X$ equals $[p]$ or $N_i$, and then as $Q$-algebraic branching systems, $X$ is isomorphic to $[p]$ or $N_i$.
\hfill $\square$

\vskip 5pt

\section{Twisted representations}

In this section, we study the twisted representations of the irreducible representations in Section 3 under the scaling action
of the Leavitt path algebra. In particular, we obtain new irreducible representations for rational tail-equivalence classes.
In the end, we prove the faithfulness of some completely reducible  representation.

Let $Q$ be an arbitrary quiver. Denote by $k^\times$ the \emph{multiplicative group} of $k$, and by ${(k^\times)}^{Q_1}$
the product group. Its elements are of the form ${\bf a}=(a_\alpha)_{\alpha\in Q_1}$ with each $a_\alpha\in k^\times$, and its
multiplication is componentwise. For each ${\bf a}$, there is an algebra automorphism $\gamma_{{\bf a}}\colon L_k(Q)
\rightarrow L_k(Q)$ such that $\gamma_{\bf a} (e_i)=e_i$, $\gamma_{\bf a} (\alpha)=a_\alpha \alpha$, and $\gamma_{\bf a} (\alpha^*)=a_\alpha^{-1}\alpha^*$. This gives rise to a group homomorphism $\gamma\colon {(k^\times)}^{Q_1}\rightarrow {\rm Aut}(L_k(Q)) $, which is injective. This is called the \emph{(generalized) scaling action}; compare \cite[Definition 2.13]{AT}.

For an element ${\bf a}=(a_\alpha)_{\alpha\in Q_1}$ and a nontrivial path $p=\alpha_n \cdots \alpha_2\alpha_1$ in $Q$, set
$a_p=a_{\alpha_n}\cdots a_{\alpha_2} a_{\alpha_1}$. The element ${\bf a}$ is called $p$-\emph{stable} provided that
$a_p=1$.

Recall that for a representation $M$ of an algebra $A$ and an automorphism $\sigma$ of $A$,
we have the \emph{twisted representation} $M^\sigma$ as follows: $M^\sigma=M$ as vector spaces, and the action
 is given by $a. m^\sigma=(\sigma(a).m)^\sigma$.  Here, for an element $m$ in $M$, we denote by $m^\sigma$ the corresponding
element in $M^\sigma$. Moreover, the representation  $M^\sigma$ is irreducible if and only if so is $M$.

For the  Leavitt path algebra, we write the twisted representation $M^{\gamma_{\bf a}}$ simply as $M^{\bf a}$. Observe
that $M^{\bf 1}=M$.

Recall the irreducible representations $\mathcal{F}_{[p]}$ and $\mathcal{N}_i$ constructed in
Section 3. We are interested in their twisted representations $\mathcal{F}_{[p]}^{\bf a}$ and $\mathcal{N}_i^{\bf a}$.

\begin{prop}\label{prop:scaling}
Let $Q$ be a quiver, and let ${\bf a}, {\bf b}\in {(k^\times)}^{Q_1}$. We use the notation as above.
Then the following statements hold.
\begin{enumerate}
\item[(1)] For $[p]\in \widetilde{Q}_{\infty}$ an irrational class, the two representations $\mathcal{F}_{[p]}^{\bf a}$ and $\mathcal{F}_{[p]}^{\bf b}$ are isomorphic.
\item[(2)] For $[q^\infty]\in \widetilde{Q}_{\infty}$ a rational class with $q$ a simple oriented cycle,
the two representations $\mathcal{F}_{[q^\infty]}^{\bf a}$ and $\mathcal{F}_{[q^\infty]}^{\bf b}$ are isomorphic if and only if
${\bf a} {\bf b}^{-1}$ is $q$-stable.
\item[(3)] For $i\in Q_0^s$ a sink, the two representations $\mathcal{N}_i^{\bf a} $ and $\mathcal{N}_i^{\bf b}$ are isomorphic.
\end{enumerate}
\end{prop}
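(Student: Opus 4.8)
The plan is to treat parts (1), (3) and the ``if'' direction of (2) uniformly by constructing an explicit \emph{diagonal} isomorphism, and to handle the ``only if'' direction of (2) by a spectral argument. First I would unwind the twisted action. Writing ${\bf c}={\bf a}{\bf b}^{-1}$, so that $c_\alpha=a_\alpha b_\alpha^{-1}$ for each $\alpha$, a direct computation from the definition of $\gamma_{\bf a}$ gives, on $\mathcal{F}_{[p]}^{\bf a}$, that $\alpha.r^{\bf a}=a_\alpha\,\delta_{\alpha,\alpha_1}(\tau_{>1}(r))^{\bf a}$ and $\alpha^*.r^{\bf a}=a_\alpha^{-1}\delta_{t(\alpha),s(r)}(r\alpha)^{\bf a}$, with the analogous formulas on $\mathcal{N}_i^{\bf a}$. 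I would then make the ansatz $\Phi(r^{\bf a})=\lambda_r\, r^{\bf b}$ for scalars $\lambda_r\in k^\times$ indexed by the relevant basis paths. Comparing the two actions shows that $\Phi$ is a homomorphism of representations precisely when $\lambda_{r\alpha}=c_\alpha\,\lambda_r$ whenever the prepending $r\alpha$ is defined (the relation coming from the $\alpha$-action, which removes an arrow, is equivalent to this one). Since each $\lambda_r$ lies in $k^\times$, any such $\Phi$ is automatically bijective, so the whole problem reduces to the existence of a family $(\lambda_r)$ satisfying this single multiplicative relation.

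For part (3) this is immediate: every basis path $r$ of $\mathcal{N}_i$ has a unique decomposition into arrows, so setting $\lambda_r=c_r$ (the product of the $c_\alpha$ over the arrows of $r$, with $\lambda_{e_i}=1$) manifestly satisfies $\lambda_{r\alpha}=c_\alpha\lambda_r$; this yields $\mathcal{N}_i^{\bf a}\cong\mathcal{N}_i^{\bf b}$. For part (1) there is no canonical basepoint, so I would fix any $p_0\in[p]$, set $\lambda_{p_0}=1$, and for $r\in[p]$ with a witness $\tau_{>n}(r)=\tau_{>m}(p_0)$ define $\lambda_r=c_{\tau_{\leq n}(r)}\,c_{\tau_{\leq m}(p_0)}^{-1}$. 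The point to check is well-definedness: if $(n,m)$ and $(n',m')$ are two witnesses with $n'\geq n$, then the common tail $w=\tau_{>n}(r)$ satisfies $\tau_{>(n'-n)}(w)=\tau_{>(m'-m)}(w)$; irrationality of $[p]$ forces $n'-n=m'-m$, and then splitting off the extra initial segment shows the two formulas for $\lambda_r$ agree. This produces the diagonal isomorphism $\mathcal{F}_{[p]}^{\bf a}\cong\mathcal{F}_{[p]}^{\bf b}$.

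For the ``if'' direction of part (2) I would run the same basepoint construction with $p_0=q^\infty$ and $\ell=l(q)$. The new feature is periodicity: because $\tau_{>\ell}(q^\infty)=q^\infty$, two witnesses for $r\sim q^\infty$ may now differ by $n'-n-(m'-m)=d\ell$ for some integer $d$, and simplicity of $q$ guarantees that $w$ has period exactly $\ell$, so these are the only possible discrepancies. Tracking the extra $d$ periods, the two candidate values of $\lambda_r$ differ exactly by the factor $(c_q)^d$. Hence the family $(\lambda_r)$ is consistent precisely when $c_q=1$, i.e. when ${\bf a}{\bf b}^{-1}$ is $q$-stable, and in that case the diagonal map is an isomorphism $\mathcal{F}_{[q^\infty]}^{\bf a}\cong\mathcal{F}_{[q^\infty]}^{\bf b}$.

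The main obstacle is the ``only if'' direction of (2), which I would prove by a basepoint-free spectral argument using the path element $q\in L_k(Q)$. On $\mathcal{F}_{[q^\infty]}^{\bf a}$ one computes $q.r^{\bf a}=a_q(\tau_{>\ell}(r))^{\bf a}$ if $\tau_{\leq\ell}(r)=q$ and $q.r^{\bf a}=0$ otherwise; in particular $q.(q^\infty)^{\bf a}=a_q(q^\infty)^{\bf a}$, so $(q^\infty)^{\bf a}$ is an eigenvector with eigenvalue $a_q$. I claim it is, up to scalars, the only eigenvector with nonzero eigenvalue. Indeed, for a finitely supported $v=\sum_r\mu_r r^{\bf a}$ with $q.v=\kappa v$ and $\kappa\neq 0$, matching coefficients gives $\mu_s=0$ unless $s(s)=s(q)$, and $\kappa\mu_s=a_q\mu_{sq}$ for those $s$; since prepending $q$ sends $s$ to a distinct path unless $s=q^\infty$ (the unique path fixed by this operation), any $\mu_{s_0}\neq 0$ with $s_0\neq q^\infty$ would force the infinitely many coefficients $\mu_{s_0q^n}$ to be nonzero, contradicting finite support. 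Thus the unique nonzero eigenvalue of $q$ on $\mathcal{F}_{[q^\infty]}^{\bf a}$ is $a_q$. Any isomorphism $\mathcal{F}_{[q^\infty]}^{\bf a}\cong\mathcal{F}_{[q^\infty]}^{\bf b}$ intertwines the action of $q$ and hence matches their nonzero eigenvalues, giving $a_q=b_q$, that is, $c_q=1$. This is exactly the $q$-stability of ${\bf a}{\bf b}^{-1}$, completing part (2).
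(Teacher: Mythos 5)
Your proof is correct, and its constructive half is essentially the paper's own argument: the paper likewise builds diagonal isomorphisms $r\mapsto\theta(r)\,r$, defining $\theta$ by a basepoint-and-witness formula and using irrationality (in (1)) or the fact that simplicity of $q$ gives $q^\infty$ exact period $l(q)$ (in (2)) to check well-definedness; your cocycle condition $\lambda_{r\alpha}=c_\alpha\lambda_r$ is a clean packaging of that same computation, and working directly with ${\bf c}={\bf a}{\bf b}^{-1}$ replaces the paper's preliminary reduction to the case ${\bf b}={\bf 1}$ (equivalent, since twisting is a group action). Where you genuinely diverge is the ``only if'' direction of (2). The paper argues by rigidity: it invokes the endomorphism computation from the proof of Theorem \ref{thm:1} to conclude that any isomorphism $\phi\colon\mathcal{F}_{[q^\infty]}\rightarrow\mathcal{F}_{[q^\infty]}^{\bf a}$ satisfies $\phi(q^\infty)=\lambda\,q^\infty$, and then compares $\phi(q.q^\infty)$ with $q.\phi(q^\infty)$ to force $a_q=1$. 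You instead exhibit $a_q$ as the unique nonzero eigenvalue of the operator $q$ on $\mathcal{F}_{[q^\infty]}^{\bf a}$; your finite-support argument is sound, since prepending $q$ fixes only $q^\infty$, so a nonzero coefficient at some $s_0\neq q^\infty$ would propagate to the infinitely many distinct paths $s_0q^n$. Both routes work. Yours is self-contained --- it does not require re-checking that the untwisted rigidity argument of Theorem \ref{thm:1} carries over verbatim to twisted actions, a point the paper leaves to the reader --- and it isolates a genuine isomorphism invariant (the nonzero spectrum of the element $q$), whereas the paper's route is shorter given the work already done in Section 3.
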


\begin{proof}
To show (1), it suffices to prove that $\mathcal{F}_{[p]}\simeq \mathcal{F}_{[p]}^{\bf a}$ for any ${\bf a} \in (k^\times)^{Q_1}$.
Fix $p_0\in [p]$. Then for each $q\in [p]$, we may choose  natural numbers $n$ and $m$ such that $\tau_{>n}(q)=\tau_{>m}(p_0)$. Since the left-infinite path $p_0$ is irrational, the number $n-m$ is unique for $q$. For the same reason, the scalar $\theta(q):=(a_{\tau_{\leq n}(q)})^{-1} a_{\tau_{\leq m}(p_0)}$ is independent of the choice of $n$ and $m$.  Then we have the required isomorphism $\phi\colon \mathcal{F}_{[p]}\rightarrow \mathcal{F}_{[p]}^{\bf a}$, which sends $q\in [p]$ to $\theta(q)q$. One proves (3) with
a similar argument.

To see (2), it suffices to prove that $\mathcal{F}_{[q^\infty]}\simeq \mathcal{F}_{[q^\infty]}^{\bf a}$ if and only if ${\bf a}$ is $q$-stable. For the ``only if" part, we observe that any isomorphism $\phi\colon \mathcal{F}_{[q^\infty]}\rightarrow  \mathcal{F}_{[q^\infty]}^{\bf a}$ satisfies that $\phi(q^\infty)=\lambda q^\infty$ for a nonzero scalar $\lambda$; consult
the third paragraph in the proof of Theorem \ref{thm:1}. Then $\phi(q^\infty)=\phi(q. q^\infty)=q. \phi(q^\infty)= \lambda a_q \; q^\infty$. This implies that $a_q=1$.

Consider the ``if" part.  For each $p\in [q^\infty]$, take the smallest natural number $n_0$ such that $\tau_{>n_0}(p)=q^\infty$, and
set $\theta(p)=(a_{\tau_{\leq n_0}(p)})^{-1}$; in addition, set $\theta(q^\infty)=1$. Define a linear map $\phi\colon \mathcal{F}_{[q^\infty]}\rightarrow  \mathcal{F}_{[q^\infty]}^{\bf a}$ sending $p$ to $ \theta(p) p$. It is routine to verify that this is an isomorphism of representations. Here, one needs to use that ${\bf a}$ is $q$-stable in one particular case.
\end{proof}

To summarize, we list all the irreducible representations of the Leavitt path algebra, that are constructed in this paper. For this end, we fix
for each rational class $[p]\in \widetilde{Q}^{\rm rat}_\infty$ a simple oriented cycle $q=\alpha_n \cdots \alpha_2\alpha_1$ with $p\sim q^\infty$. For each $\lambda
\in k^\times$, set ${\bf a}_{\lambda,q}=(a_\alpha)_{\alpha\in Q_1}$ such that $a_{\alpha_{1}}=\lambda$ and $a_\alpha=1$ for $\alpha\neq \alpha_1$.

Set $\mathcal{F}_{[p]}^\lambda=\mathcal{F}_{[p]}^{{\bf a}_{\lambda,q}}$. By Proposition \ref{prop:scaling}(2)
we have that for each ${\bf a}\in (k^\times)^{Q_1}$,  $\mathcal{F}^{\bf a}_{[p]}$ is isomorphic to $\mathcal{F}^{a_q}_{[p]}$; moreover, $\mathcal{F}_{[p]}^\lambda$ is isomorphic to $\mathcal{F}_{[p]}^{\lambda'}$ if and only if $\lambda=\lambda'$. Observe that $\mathcal{F}_{[p]}^1=\mathcal{F}_{[p]}$.

We obtain a list of  pairwise non-isomorphic irreducible representations for the Leavitt
path algebra $L_k(Q)$. The representations are parameterized  by the disjoint union $\widetilde{Q}^{\rm irr}_\infty  \cup (k^\times \times \widetilde{Q}^{\rm rat}_\infty) \cup Q_0^s$.

\begin{thm}\label{thm:conclusion}
Let $Q$ be a quiver and let $k$ be field. Then the following set
$$ \{\mathcal{F}_{[p]}\; |\; [p]\in \widetilde{Q}_\infty^{\rm irr}\}\;  \cup \; \{\mathcal{F}_{[p]}^\lambda\; |\;
\lambda \in k^\times, [p]\in\widetilde{Q}^{\rm rat}_\infty\} \; \cup \;  \{\mathcal{N}_i \; |\; i\in Q_0^s\}$$
consists  of pairwise non-isomorphic irreducible  representations of $L_k(Q)$.
\end{thm}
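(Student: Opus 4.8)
The plan is to treat this as a bookkeeping statement resting on the results already established. First I would dispose of irreducibility: each $\mathcal{F}_{[p]}$ with $[p]$ irrational and each $\mathcal{N}_i$ is irreducible by Theorems \ref{thm:1} and \ref{thm:N}, while each $\mathcal{F}_{[p]}^\lambda$ is, by definition, a twist of the irreducible $\mathcal{F}_{[p]}$ and is therefore irreducible, since twisting by an automorphism preserves irreducibility. It then remains to verify that the members of the displayed set are pairwise non-isomorphic, and I would organize the pairs into three types: (i) an $\mathcal{F}$-type representation versus an $\mathcal{N}$-type; (ii) two $\mathcal{N}$-types; (iii) two $\mathcal{F}$-types.

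Type (ii) is immediate from Theorem \ref{thm:N}(2). For type (i) I would introduce the isomorphism invariant $W(M)=\{v\in M \mid \alpha.v=0 \text{ for all } \alpha\in Q_1\}$; a representation isomorphism $\phi$ satisfies $\phi(\alpha.v)=\alpha.\phi(v)$, so it carries $W$ onto $W$, and in particular the condition $W(M)\neq 0$ is preserved. One checks that $e_i\in W(\mathcal{N}_i)$, so $W(\mathcal{N}_i)\neq 0$, whereas for any left-infinite path a short computation on a normal form shows $W(\mathcal{F}_{[p]})=0$; since each twist $\gamma_{\bf a}$ only rescales $\alpha$ by the nonzero scalar $a_\alpha$, the defining condition is unchanged and $W(\mathcal{F}_{[p]}^\lambda)=0$ as well. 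Hence no $\mathcal{F}$-type is isomorphic to any $\mathcal{N}_i$.

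For type (iii) the key is a rigidity statement: any nonzero homomorphism $\phi\colon \mathcal{F}_{[p]}^{\bf a}\to \mathcal{F}_{[p']}^{\bf b}$ forces $[p]=[p']$. I would prove this by adapting the homomorphism argument in the proof of Theorem \ref{thm:1}. Writing $\phi(r)=\sum_i \mu_i s_i$ in normal form for $r\in[p]$ and supposing some $s_1\neq r$, I choose a sufficiently long truncation $x=\tau_{\leq n}(s_1)$ so that $x$ separates $s_1$ from the other $s_i$ and satisfies $x\neq \tau_{\leq n}(r)$. Then, computing in the source, $x\cdot_{\bf a} r=a_x(x.r)=0$, so $x.\phi(r)=\phi(x\cdot_{\bf a} r)=0$; but computing in the target, $x.\phi(r)=\mu_1 b_x\,\tau_{>n}(s_1)\neq 0$, a contradiction. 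Thus $s_1=r\in[p']$, whence $[p]=[p']$. Consequently two distinct classes — in particular an irrational class against a rational one, or two distinct rational classes — yield non-isomorphic representations, whatever the twists. The only pairs not yet handled share a common rational class, i.e. are $\mathcal{F}_{[p]}^\lambda$ and $\mathcal{F}_{[p]}^{\lambda'}$ with $[p]$ rational, and these are non-isomorphic for $\lambda\neq\lambda'$ by Proposition \ref{prop:scaling}(2) together with the computation recorded just before the theorem.

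The main obstacle is type (iii): one must rule out that the scaling action merges representations built from genuinely different tail-equivalence classes. The point that makes it go through is precisely that the twisting automorphisms $\gamma_{\bf a}$ multiply the action of each path by a nonzero scalar only, so the combinatorial separation argument via long truncations is undisturbed; this is why the rigidity of homomorphisms proved for the untwisted representations in Theorem \ref{thm:1} survives under twisting, and the same remark is what makes the invariant $W$ insensitive to twists in type (i).
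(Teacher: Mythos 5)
Your proposal is correct and is in substance the paper's own proof: the paper simply cites Theorem \ref{thm:1}(2), Theorem \ref{thm:N}(3) and Proposition \ref{prop:scaling}(2), remarks that ``the same argument therein'' extends to the twisted representations, and omits exactly the details you write out (irreducibility being preserved under twisting, and the truncation-separation argument showing that a nonzero homomorphism $\mathcal{F}_{[p]}^{\bf a}\rightarrow \mathcal{F}_{[p']}^{\bf b}$ forces $[p]=[p']$, which goes through because the twists only rescale by nonzero scalars). Your one departure is cosmetic: for the $\mathcal{F}$-versus-$\mathcal{N}$ pairs you repackage the argument of Theorem \ref{thm:N}(3) as the twist-insensitive invariant $W(M)=\{v\in M \mid \alpha.v=0 \text{ for all } \alpha\in Q_1\}$, which rests on the same computation the paper uses there.
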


\begin{proof}
It suffices to show that these representations are pairwise non-isomorphic. This follows from
Theorem \ref{thm:1}(2), Theorem \ref{thm:N}(3) and Proposition \ref{prop:scaling}(2). Here, we need
to use the same argument therein to show that $\mathcal{F}^\lambda_{[p]}\simeq \mathcal{F}^{\lambda'}_{[p']}$ implies
that $[p]=[p']$. Moreover, $\mathcal{F}_{[p]}^\lambda$ is neither isomorphic to $\mathcal{F}_{[p']}$ with $[p']$ irrational, nor
isomorphic to $\mathcal{N}_i$ with $i$ a sink. We omit the details.
\end{proof}

We close this paper with a result on the faithfulness of the following completely reducible  representation $$\mathcal{S}=\bigoplus_{[p]\in \widetilde{Q}_\infty^{\rm irr}} \; \mathcal{F}_{[p]} \bigoplus_{\lambda \in k^\times, [p]\in\widetilde{Q}^{\rm rat}_\infty}   \; \mathcal{F}_{[p]}^\lambda \; \;  \bigoplus_{i\in Q_0^s} \; \mathcal{N}_i.$$  This partially remedies the counterexample in Remark \ref{rem:faithful}.

\begin{prop}\label{prop:faithfulness2}
Let $Q$ be a row-finite quiver, and let $k$ be an infinite field.  Then the  representation
$\mathcal{S}$ of $L_k(Q)$ is faithful.
\end{prop}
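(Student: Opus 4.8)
The plan is to prove faithfulness by showing that every nonzero $u \in L_k(Q)$ acts nontrivially on $\mathcal{S}$. I would follow the template of the proof of Proposition \ref{prop:faithfulness}, reducing via local units to the case where $u = \sum_{i=1}^l \lambda_i\, p_i^*q_i$ has $s(p_i) = j$ for a single fixed vertex $j$, and then inducting on $\kappa(u)$. The inductive step for $\kappa(u) > 0$ is identical to that proposition: the vertex $j$ is regular (being a non-sink in a row-finite quiver), so by relation (4) some $v = \alpha u$ is nonzero with $\kappa(v) < \kappa(u)$, and the induction hypothesis finishes. The base case $\kappa(u) = 0$, where $u = \sum_i \lambda_i q_i$ with $t(q_i) = j$, is where the new ingredient enters.

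In the base case I would again distinguish whether there is a finite path from $j$ to a sink or a left-infinite path starting at $j$. The first sub-case (path to a sink) is handled exactly as before using $\mathcal{N}$. The genuine obstacle, and the reason the infinite field hypothesis appears, is the second sub-case: in Proposition \ref{prop:faithfulness} one needed a \emph{non-cyclic} left-infinite path $p$ at $j$ to guarantee $u.(pq_1) \neq 0$, but now the hypothesis only supplies \emph{some} left-infinite path, which may be cyclic (rational). If $p = q^\infty$ is cyclic, acting by $u$ on $p q_1 \in \mathcal{F}_{[p]}$ can produce cancellation, since the truncations $\tau_{>n_i}(p)$ need no longer be distinct. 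The key step is to defeat this cancellation by passing to the twisted representations $\mathcal{F}_{[p]}^\lambda$ and exploiting that $k$ is infinite.

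Concretely, in this sub-case I would compute the action of $u$ on the vector $p q_1$ inside each twisted representation $\mathcal{F}_{[p]}^\lambda$. Writing $u.(pq_1) = \lambda_1 p + \sum \lambda_i\, \tau_{>n_i}(p)$ in $\mathcal{F}_{[p]}$, the twist by $\gamma_{{\bf a}_{\lambda,q}}$ multiplies the contribution of each term $q_i = \tau_{\leq n_i}(p) q_1$ by a power of $\lambda$ recording how many times the distinguished arrow $\alpha_1$ of the cycle $q$ is traversed. Grouping the terms that land on the same left-infinite path $\tau_{>n_i}(p)$, the coefficient of each such path becomes a polynomial in $\lambda$ over $k$; the term $\lambda_1 p$ contributes a genuinely nonzero constant coefficient to one of these polynomials. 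Since these coefficient polynomials are not all identically zero and $k$ is infinite, there exists some $\lambda \in k^\times$ at which the relevant polynomial does not vanish, so $u$ acts nontrivially on $\mathcal{F}_{[p]}^\lambda \subseteq \mathcal{S}$. This is precisely the step that ``remedies the counterexample in Remark \ref{rem:faithful}'': for $R_1$ the field must be infinite so that enough distinct scalars $\lambda$ are available to separate the action. The main obstacle is thus organizing the bookkeeping of the $\lambda$-powers attached to each term and verifying that the resulting coefficient functions of $\lambda$ are nonzero polynomials, after which an infinite field immediately yields a point of non-vanishing.
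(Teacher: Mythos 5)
Your proposal is correct and takes essentially the same route as the paper's proof: reduce, exactly as in Proposition \ref{prop:faithfulness}, to the case $\kappa(u)=0$ at a vertex $j$ whose only available left-infinite path is cyclic, $p=q^\infty$ with $q$ a simple oriented cycle, then act on $pq_1$ inside the twisted representations $\mathcal{F}_{[p]}^\lambda$ and observe that the coefficient of $p$ in $u.(pq_1)$ is a nonzero polynomial in $\lambda$ (the terms $q_i=q^{m_i}q_1$ contribute $\lambda_i\lambda^{m_i}$ with distinct exponents, the $q_1$-term having strictly smallest degree, and all other terms land on paths different from $p$), so the infinite field supplies $\lambda\in k^\times$ at which it does not vanish. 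This is precisely the paper's computation with $I_1=\{i\;|\;q_i=q^{m_i}q_1\}$ and coefficient $\lambda_1+\sum_{i\in I_1}\lambda_i\lambda^{m_i}$.
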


\begin{proof}
We observe that a modified argument in the proof of Proposition \ref{prop:faithfulness} will work. It suffices
to show that any nonzero element $u=\sum_{i=1}^l \lambda_i q_i$ in $L_k(Q)$ acts nontrivially on $\mathcal{S}$.
Here, $u$ is in its normal form (see (\ref{equ:1})), and $\kappa(u)=0$, that is, all the $q_i$'s are paths in $Q$. We may assume that $t(q_i)=j$
for some $j\in Q_0$ and all $1\leq i\leq l$. Without loss of generality, we assume that $q_1$ is shortest among all
the $q_i$'s.

By Proposition \ref{prop:faithfulness} and its proof, we may assume that there is a cyclic path $p=q^\infty$ starting at $j$ with $q$ a simple oriented cycle.

Consider $pq_1$ as an element in $\mathcal{F}_{[p]}^\lambda$ for some $\lambda$. Consider
$I_1=\{i \; |\; 2\leq i\leq l, q_i=q^{m_i} q_1 \mbox{ for some } m_i\geq 1\}$ and  $I_2=\{2, 3, \cdots, l\}\setminus I_1$.
Here, $l(q)m_i=l(q_i)-l(q_1)$ for $i\in I_1$. Then we have
\begin{align*} u.(pq_1) &=
\lambda_1 p + \sum_{i\in I_1} \lambda_i\;  q_i.(pq_1) + \sum_{i \in I_2} \lambda_i\;  q_i.(qq_1)\\
& = (\lambda_1 +\sum_{i\in I_1} \lambda_i \lambda^{m_i})p+ \sum_{i \in I_2} \lambda_i \; q_i.(qq_1).
 \end{align*}
 We observe that in the summation indexed by $I_2$, $q_i.(qq_1)$ is  either
zero  or a multiple of a path in $\mathcal{F}_{[p]}^\lambda$ that is different from $p$. Since the field $k$
is infinite, we  may take $\lambda\in k^\times$ such that $\lambda_1 +\sum_{i\in I_1} \lambda_i \lambda^{m_i}\neq 0$. In this
case, we have that in  $\mathcal{F}_{[p]}^\lambda$,  $u.(pq_1)\neq 0$. We are done.
\end{proof}

\begin{rem}
We point out that for a finite field $k$, the representation $\mathcal{S}$ might not be faithful. The example is
given by $Q=R_1$ in Example \ref{exm:Leavitt}, the quiver consisting of one vertex with one loop attached.
\end{rem}

\bibliography{}

\vskip 10pt

 {\footnotesize \noindent Xiao-Wu Chen\\
  School of Mathematical Sciences,
  University of Science and Technology of
China, Hefei 230026, Anhui Province, PR China \\
Wu Wen-Tsun Key Laboratory of Mathematics, USTC, Chinese Academy of Sciences, Hefei 230026, Anhui Province, PR China.\\
URL: http://mail.ustc.edu.cn/$^\sim$xwchen}

\end{document}